\theoremstyle{plain}
\newtheorem{lemma}{Lemma}[section]
\newtheorem{proposition}{Proposition}[section]
\newtheorem{theorem}{Theorem}[section]
\theoremstyle{definition}
\newtheorem{definition}{Definition}[section]
\newtheorem{example}{Example}[section]
\theoremstyle{remark}
\newtheorem{remark}{Remark}[section]
\title{\bfseries\scshape{Crossed modules of ternary Leibniz algebras}}
\author{\bfseries\scshape Kol B\'eatrice GAMOU\thanks{E-mail address: \tt{kolbeatrice18@gmail.com}} \\
D\'epartement de Math\'ematiques,\\
Universit\'e Gamal Abdel Nasser, Conakry, Guinea. \\
\bfseries\scshape  Ibrahima BAKAYOKO \thanks{E-mail address: \tt{ibrahimabakayoko27@gmail.com}}\\
D\'epartement de Math\'ematiques,\\ 
Universit\'e de N'Z\'er\'ekor\'e, Guinea.}
\date{} 
\begin{document} 
\maketitle

\tableofcontents

\begin{abstract} 
The aim of this paper is to construct triassociative algebras (from operators), new actions and crossed modules from a given one,
 and to make the connexion between these notions on Leibniz algebras or triassociative algebras and the corresponding notions 
on ternary Leibniz algebras.

\end{abstract} 

\noindent
{\bf Mathematics Subject Classification 2020:}   17A40, 18G45, 17A32.\\
\noindent
{\bf Keywords:} Leibniz algebras, ternary Leibniz algebras, triassociative algebras, actions, crossed modules.

\section{Introduction}
Triassociative algebras or associative trialgebras, as they name indicated, are vector space endowed with three associative products and satisfying eight other identities.
They generalize associative and diassociative algebras and appear in algebraic topology and some other branches of mathematics.
 Contrary to certains algebraic structures such as diassociative algebras \cite{PG,BW,BR,JLL,JR1,DY}, triassociative algebras are
 less investigated. Nevertheless, the classification of complex triassociative algebras up to dimension 2 and
the weighted averaging operator are also introduce in \cite{LJ} and are used to construct triassociative algebras.
The cohomology of triassociative algebras are studied in \cite{DY}, while their BiHom-version are done in \cite{EBA}.

Leibniz algebras, also known as Loday algebras, are introduced by Loday in \cite{JLL} as a non-commutative version of Lie algebras.
Various aspects of these algebras are intensively investigated. Among which one can cite \cite{IA}, \cite{JF}, \cite{BDW}, \cite{JM}.
A ternary algebra consists of a linear vector space $V$ endowed with a trilinear map $\mu : V\times V\times V\rightarrow V$ satisfying
 the Leibniz identity. They originate from the work of Jacobson in 1949 in the study of associative algebra $(A, \cdot)$ that
 are closed relative to the ternary operation $[[a, b], c]$, where $[a, b]=a\cdot b-b\cdot a$. 

Mathematical objects are often understood through studying operators defined on them. For instance, in Gallois theory a field
 is studied by its automorphisms, in analysis functions are studied through their derivations,  and in geometry manifolds are studied through 
their vector fields. Fifty years ago, several operators have been found from studies in analysis, probability and combinatorics. Among these
 operators, one can cite, element of centroid \cite{RM2}, averaging operator, Reynolds operator, Leroux's TD operator, Nijenhuis operator and
 Rota-Baxter operator \cite{LG,MD}.

In \cite{W}, Whitehead introduced crossed modules as a model for connected homotopy 2-types. After that crossed modules of many algebraic
 structures bas been studied, such as associative algebras \cite{JKC}, Lie algebras \cite{JKC}, Leibniz \cite{JKC}, diassociative algebra
 \cite{JKC}, groups \cite{TP}, n-Leibniz algebra \cite{JE}, 3-Lie algebras \cite{BW,AKM}, Lie-Rinehart algebras \cite{I}, 
3-Lie-Rinehart algebras \cite{RW}, etc. They appear in many branches of mathematics such as category theory, cohomology of algebraic structures,
 differential geometry and in physics. 

It is a commun fact to associate to a category of a given algebraic structure to the category of another algebraic structure; for example
 associative to Lie algebra, associative trialgebra to ternary Leibniz algebra, Leibniz algebra to ternary Leibniz, etc. The aim of this paper 
is to construct crossed modules of ternary Leibniz algebras in this way.
In section 2,we give preliminaries on triassociative algebras, Leibniz algebras, ternary Leibniz algebras, and give some
constructions (using operators, quotient and direct sum) of these algebras and recall their crossed modules.
Section 3 contains the constructions concerning crossed modules of ternary Leibniz algebras from crossed modules of triassociative algebras, 
crossed modules of Leibniz algebras and another crossed modules of Leibniz algebras via Rota-Baxter operator. Some propreties of these crossed 
mudules are given.

\section{Preliminaries and some results}
In this section, we recall some basic notions and results regarding Leibniz algebra, ternary Leibniz algebras, triassociative algebras.
\subsection{Triassociative crossed modules}
\begin{definition}\label{tad}
 A triassociative algebra is a vector space $A$ equipped with  three binary maps $\dashv, \perp, \vdash :
A\otimes A\rightarrow A$ (called left, middle and right respectively), satisfying the following axioms :
\begin{eqnarray}
(x\dashv y)\dashv z& \stackrel{(1)}{=}&x\dashv(y\dashv z), \quad (x\vdash y)\vdash z\stackrel{(2)}{=}x\vdash(y\vdash z),
 \quad (x\perp y)\perp z\stackrel{(3)}{=}x\perp(y\perp z),\nonumber\\
 (x\dashv y)\dashv z&\stackrel{(4)}{=}&x\dashv(y\vdash z), \quad (x\dashv y)\dashv z\stackrel{(5)}{=}x\dashv(y\perp z), 
\quad (x\vdash y)\dashv z\stackrel{(6)}{=}x\vdash(y\dashv z),\nonumber\\
(x\dashv y)\vdash z&\stackrel{(7)}{=}&x\vdash(y\vdash z),\quad (x\perp y)\vdash z\stackrel{(8)}{=}x\vdash (y\vdash z), 
\quad(x\perp y)\dashv z\stackrel{(9)}{=}x\perp(y\dashv z),\nonumber\\
(x\dashv y)\perp z&\stackrel{(10)}{=}&x\perp(y\vdash z),\quad (x\vdash y)\perp z\stackrel{(11)}{=}x\vdash(y\perp z).\nonumber
\end{eqnarray}
for  all $x, y, z\in A$.
\end{definition}
\begin{example}
 a) Any associative algebra $(A, \cdot)$ is an associate trialgebra with $\cdot=\dashv=\perp=\vdash$.\\
 b) Any associative dialgebra is an associative trialgebra with trivial middle product.\\
c) If $(A, \dashv, \perp, \vdash)$ is an associative  trialgebra, then so is $(A, \dashv', \perp', \vdash')$, where
$$x\dashv' y:= y\vdash x, \quad x\perp' y:= y\perp x, \quad x\vdash' y:= y\dashv x.$$
\end{example}
\begin{example}
See \cite{E,SLE} for other examples.
\end{example}

\begin{definition}
 Let $({A}, \dashv_A, \perp_A, \vdash_A)$ and  $(B,\dashv_B, \perp_B, \vdash_B)$ be two triassociative algebras. A linear map $f : A\rightarrow B$
is said to be a morphism of triassociative algebras if
\begin{eqnarray}
 f(x\dashv_A y)=f(x)\dashv_Bf(y),\quad 
f(x\perp_A y)=f(x)\perp_Bf(y) \quad \mbox{and}\quad 
f(x\vdash_A y)=f(x)\vdash_Bf(y),\nonumber
\end{eqnarray}
 for all $x, y \in {A}.$
\end{definition}

In order to give other structures of triassociative algebras, we give the following definition.
\begin{definition}\label{bk3}
Let $(A, \dashv, \perp, \dashv)$ be an triassociative algebra. Let $\cdot$ stands for the three operations $\dashv, \perp$ and $\vdash$.
 A linear map $\varphi : A\rightarrow A$ is said to be a
\begin{enumerate}
\item [i)] Rota-Baxter operator (of weight $\lambda\in\mathbb{K}$) if
\begin{eqnarray}
 \varphi(x)\cdot \varphi(y) = \varphi\Big(\varphi(x)\cdot y + x\cdot \varphi(y) +\lambda x\cdot y\Big),\nonumber
\end{eqnarray}
\item [ii)] Nijenhuis operator if 
\begin{eqnarray}
 \varphi(x)\cdot \varphi(y) = \varphi\Big(\varphi(x)\cdot y + x\cdot \varphi(y) -\varphi(x\cdot y)\Big), \nonumber
\end{eqnarray}
\item [iii)] Reynolds operator if 
\begin{eqnarray}
 \varphi(x)\cdot \varphi(y) = \varphi\Big(\varphi(x)\cdot y + x\cdot \varphi(y) -\varphi(x)\cdot\varphi(y)\Big),\nonumber
\end{eqnarray}
\end{enumerate}
for all $x, y\in A$.
\end{definition}

The following theorem follows from direct computation (for example see \cite{BA}).
\begin{theorem}
 Let $(A, \dashv, \perp, \vdash)$ be an triassociative algebra and $R$ (resp. $N$ and $P$) denotes a Rota-Baxter (resp. Nijenhuis and Reynolds)
 operator.
Then, $(A, \triangleleft, \triangle, \triangleright)$ is also an associative trialgebra with respect to each of the set of these operations
$$a)\quad \left\{
\begin{array}{rl}
x\triangleleft y&=R(x)\dashv y+x\dashv R(y)+\lambda x\dashv y\\
x\triangle y&=R(x)\perp y+x\perp R(y)+\lambda x\perp y\\
x\triangleright y&=R(x)\vdash y+x\vdash R(y)+\lambda x\vdash y\\ 
\end{array}
\right.,
$$
$$b)\quad \left\{
\begin{array}{rl}
x\triangleleft y&=N(x)\dashv y+x\dashv N(y)-N(x\dashv y)\\
x\triangle y&=N(x)\perp y+x\perp N(y)-N(x\perp y)\\
x\triangleright y&=N(x)\vdash y+x\vdash N(y)- N(x\vdash y)\\ 
\end{array}
\right.,
$$
$$c)\quad \left\{
\begin{array}{rl}
x\triangleleft y&=P(x)\dashv y+x\dashv P(y)-P(x)\dashv P(y)\\
x\triangle y&=P(x)\perp y+x\perp P(y)-P(x)\perp P(y)\\
x\triangleright y&=P(x)\vdash y+x\vdash P(y)- P(x)\vdash P(y)\\ 
\end{array}
\right.,
$$
for all $x, y\in A$. 
Moreover, $R, N, P : (A, \triangleleft, \triangle, \triangleright)\rightarrow (A, \dashv, \perp, \vdash)$ (resp. $N$ and $P$) are morphism of 
triassociative algebras
  and $R$ (resp. $N$ and $P$) is a Rota-Baxter (resp. Nijenhuis and Reynolds) operator on $(A, \triangleleft, \triangle, \triangleright)$.
\end{theorem}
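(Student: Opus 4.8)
The plan is to treat the three cases uniformly. Each operator $\Phi\in\{R,N,P\}$ obeys a defining identity of the shape
\[
\Phi(x)\cdot\Phi(y)=\Phi\big(\Phi(x)\cdot y+x\cdot\Phi(y)+\omega(x,y)\big),
\]
where $\cdot$ ranges over $\dashv,\perp,\vdash$ and $\omega(x,y)$ equals $\lambda\, x\cdot y$, $-\Phi(x\cdot y)$, or $-\Phi(x)\cdot\Phi(y)$ respectively; moreover the new products of the theorem are exactly $x\star y:=\Phi(x)\cdot y+x\cdot\Phi(y)+\omega(x,y)$, where $\star$ denotes the one of $\triangleleft,\triangle,\triangleright$ attached to the chosen $\cdot$. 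Comparing the two formulas yields at once the key identity
\[
\Phi(x\star y)=\Phi(x)\cdot\Phi(y),\qquad x,y\in A,
\]
which is precisely the statement that $\Phi\colon(A,\triangleleft,\triangle,\triangleright)\to(A,\dashv,\perp,\vdash)$ is a morphism of triassociative algebras, so that half of the ``moreover'' is free once the axioms are checked.

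Next I would verify the eleven trialgebra identities for $(\triangleleft,\triangle,\triangleright)$. Fixing one of them, of the form $(x\,\circ_1\, y)\,\circ_2\, z=x\,\circ_3\,(y\,\circ_4\, z)$ with $\circ_i\in\{\dashv,\perp,\vdash\}$, and writing $\star_i$ for the associated new product, I would expand $(x\,\star_1\, y)\,\star_2\, z$ using the definition of $\star$, then invoke the key identity to rewrite every $\Phi$ of a $\star$-product as the $\cdot$-product of the two $\Phi$'s. This turns the left-hand side into a sum of left-bracketed $\cdot$-monomials in $x,y,z$ with some entries replaced by their $\Phi$-images (and, for $R$, scalar multiples of powers of $\lambda$); the same operation on $x\,\star_3\,(y\,\star_4\, z)$ produces the same collection of monomials, now right-bracketed. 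Matching the two lists term by term, each pair coincides by the original identity of the same number. For $R$ this finishes the axiom. For $N$ and $P$ a single extra move is needed: the surviving terms of the form $\Phi\big(a\circ(b\circ c)\big)$ are reshaped by one more application of the defining identity of $\Phi$, after which everything cancels; concretely, for the associativity axiom in the Nijenhuis case, applying $N(a)\dashv N(b)=N\big(N(a)\dashv b+a\dashv N(b)-N(a\dashv b)\big)$ to the leftover terms on both sides reduces each to $N^{2}(x\dashv y\dashv z)-N\big(N(x)\dashv y\dashv z\big)-N\big(x\dashv y\dashv N(z)\big)$.

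For the remaining assertion I would check that $\Phi$ is still an operator of the same type on $(A,\triangleleft,\triangle,\triangleright)$, i.e. that $\Phi(x)\star\Phi(y)=\Phi\big(\Phi(x)\star y+x\star\Phi(y)+\omega^{\star}(x,y)\big)$, where $\omega^{\star}$ is the analogue of $\omega$ built from $\star$. Expanding the left-hand side by the definition of $\star$ gives three terms $\Phi^{2}(x)\cdot\Phi(y)$, $\Phi(x)\cdot\Phi^{2}(y)$, and an $\omega$-term with $\Phi$-arguments; applying the defining identity of $\Phi$ to each and gathering everything under a single $\Phi$ produces, after a brief tally of the coefficients of $\Phi^{2}(x)\cdot y$, $\Phi(x)\cdot\Phi(y)$, $x\cdot\Phi^{2}(y)$, $\Phi(x\cdot y)$ and so on, exactly the expression obtained by expanding the right-hand side; in the $N$ and $P$ cases one additionally uses $\Phi(x\star y)=\Phi(x)\cdot\Phi(y)$ to simplify a $\Phi$ of a $\star$-product that appears.

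I expect the real difficulty to be bookkeeping rather than anything conceptual: there are $11\times3$ trialgebra-axiom checks and $3\times3$ operator-compatibility checks, each an expansion into roughly five to nine terms, all following the same template. The only point demanding genuine care is that, in the $N$ and $P$ cases, invoking the defining identity a second time inside an already-expanded sum creates fresh $\Phi$-images that must be tracked for the cancellations to become visible.
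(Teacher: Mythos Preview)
Your approach is correct and matches the paper's, which simply states that the theorem ``follows from direct computation'' and refers to \cite{BA} without giving any details; your sketch is therefore considerably more explicit than what the paper provides. The organizing identity $\Phi(x\star y)=\Phi(x)\cdot\Phi(y)$ is exactly the right lever, and it makes both the morphism claim and the operator-on-the-new-structure claim fall out cleanly (for the latter, $R(R(x)\star y+x\star R(y)+\lambda\,x\star y)=R^2(x)\cdot R(y)+R(x)\cdot R^2(y)+\lambda\,R(x)\cdot R(y)=R(x)\star R(y)$, and similarly for $N,P$).

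One small bookkeeping slip in your Nijenhuis illustration: after the second application of the defining identity, both sides reduce to four terms, not three, namely
\[
N^{2}(x\dashv y\dashv z)-N\big(N(x)\dashv y\dashv z\big)-N\big(x\dashv N(y)\dashv z\big)-N\big(x\dashv y\dashv N(z)\big),
\]
so the middle term $-N\big(x\dashv N(y)\dashv z\big)$ is missing from your description. This does not affect the argument, since the same term appears symmetrically on each side, but it is worth recording if you write out the computation in full.
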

Now, we introduce element of centroid for triassociative algebras.
\begin{definition}
Let $(A, \dashv, \perp, \vdash)$ be a  triassociative algebra. An element of centroid (resp. averaging operator) on $A$  is a linear map
 $\theta : A\rightarrow A$  such that for any $x, y\in A$,
 $$\theta(x\ast y)=\theta(x)\ast y=x\ast \theta(y),\quad
 (\mbox{resp.}\quad\theta(x)\ast\theta(y)=\theta(\theta(x)\ast y)=\theta(x\ast\theta(y))$$
with $\ast= \dashv, \perp, \vdash.$
 \end{definition}

The proof of the following theorem only uses the definition of the corresponding linear map.
\begin{theorem}
 Let $(A, \dashv, \perp, \vdash)$ be a triassociative algebra and let $\theta : A\rightarrow A$ (resp. $\beta : A\rightarrow A$) be an element of cenroid
 (resp. injective averaging operator) on $A$. Then, $A$ is a triassociative algebra with respect to each of the multiplications
$$\quad \left\{
\begin{array}{rl}
x\triangleleft y&=\theta(x)\dashv y\\
x\triangle y&=\theta(x)\perp y\\
x\triangleright y&=\theta(x)\vdash y\\ 
\end{array}
\right.
\quad \mbox{and}\quad
\left\{
\begin{array}{rl}
x\triangleleft y&=\beta(x)\dashv y\\
x\triangle y&=\beta(x)\perp y\\
x\triangleright y&=\beta(x)\vdash y\\ 
\end{array}
\right.,
$$
for any $x, y\in A$.
\end{theorem}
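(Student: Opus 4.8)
The plan is to verify the eleven identities of Definition~\ref{tad} for the new operations; I will carry out the centroid case in detail, the averaging case being entirely analogous. Throughout, let $\circ$ denote any one of the three products $\dashv,\perp,\vdash$ and write $x\circ^{\theta}y:=\theta(x)\circ y$ for the corresponding new product. The first thing I would record is the elementary identity
\[
\theta\bigl(x\circ^{\theta}y\bigr)=\theta\bigl(\theta(x)\circ y\bigr)=\theta(x)\circ\theta(y),
\]
valid for each of the three products: the left equality is the definition and the right one is the centroid relation $\theta(a\circ b)=\theta(a)\circ b=a\circ\theta(b)$ applied with $a=\theta(x)$ and $b=y$.

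The main observation is that every axiom $(1)$--$(11)$ has the common shape $(x\,\circ_1\,y)\,\circ_2\,z=x\,\circ_3\,(y\,\circ_4\,z)$ with $\circ_i\in\{\dashv,\perp,\vdash\}$, so a single generic computation settles all of them at once. Using the identity above,
\[
(x\,\circ_1^{\theta}\,y)\,\circ_2^{\theta}\,z=\theta\bigl(x\,\circ_1^{\theta}\,y\bigr)\,\circ_2\,z=\bigl(\theta(x)\,\circ_1\,\theta(y)\bigr)\,\circ_2\,z,
\]
whereas directly from the definitions
\[
x\,\circ_3^{\theta}\,(y\,\circ_4^{\theta}\,z)=\theta(x)\,\circ_3\,\bigl(\theta(y)\,\circ_4\,z\bigr).
\]
Thus the new instance of the axiom is precisely the original axiom of $(A,\dashv,\perp,\vdash)$ evaluated at the triple $\bigl(\theta(x),\theta(y),z\bigr)$, hence holds. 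Specialising $(\circ_1,\circ_2,\circ_3,\circ_4)$ to each of the eleven patterns shows that $(A,\triangleleft,\triangle,\triangleright)$ with $x\triangleleft y=\theta(x)\dashv y$, $x\triangle y=\theta(x)\perp y$, $x\triangleright y=\theta(x)\vdash y$ is a triassociative algebra.

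For the averaging operator $\beta$ I would repeat the argument verbatim, the only change being that the identity recorded above is replaced by $\beta\bigl(\beta(x)\circ y\bigr)=\beta(x)\circ\beta(y)$, which is the left-hand equality in the defining relation $\beta(x)\circ\beta(y)=\beta(\beta(x)\circ y)=\beta(x\circ\beta(y))$ of an averaging operator. Each new axiom then collapses in the same way to the corresponding axiom of $A$ evaluated at $\bigl(\beta(x),\beta(y),z\bigr)$.

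I do not anticipate a genuine obstacle: the proof is essentially bookkeeping, and the only point needing care is to invoke the centroid (resp.\ averaging) relation for the correct one of the three products inside each of the mixed axioms $(4)$--$(11)$, which is legitimate since these relations are assumed for $\dashv$, $\perp$ and $\vdash$ separately. I note in passing that the injectivity of $\beta$ is not actually used in this verification.
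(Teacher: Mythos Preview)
Your proof is correct and follows exactly the approach indicated in the paper, namely a direct verification of the eleven triassociative axioms using only the defining relations of a centroid element (resp.\ averaging operator); the paper merely states that the proof ``only uses the definition of the corresponding linear map'' without writing out the details. Your unified treatment via the generic shape $(x\,\circ_1\,y)\,\circ_2\,z=x\,\circ_3\,(y\,\circ_4\,z)$ is a clean way to organize the eleven cases, and your final remark that the injectivity of $\beta$ plays no role in the argument is accurate.
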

\begin{example}
 Viewed as a triassociative algebra, the 2-dimensional associative algebra with basis $\{e_1, e_2\}$, multiplication
$$e_1e_1=e_2e_1=0, \quad e_1e_2=e_1, e_2e_2=e_2$$
and averaging operator (\cite{LJ}) $\beta(e_1)=ae_2,\; \beta(e_2)=e_2,\; a\in \bf R$, gives the following triassociative algebra
$$e_1\ast e_1=e_2\ast e_1=0, \quad e_1\ast e_2=ae_1,\quad e_2\ast e_2=e_2,$$
with $\ast=\dashv=\perp=\vdash$.
\end{example}

Now, we have the following definition.
\begin{definition}
Let $(A, \dashv, \perp, \vdash)$ be a triassociative algebra. The subset $A_o$ of $A$ is said to be a subalgebra if
$A_o$ is stable under the three multiplications i.e.  $x\dashv y, x\perp y, x\vdash y\in A_o$, for any $x, y\in A_o$. 
\end{definition}
\begin{example}
  If $\varphi : A\rightarrow B$ is a morphism of triassociative dialgebras, the image $Im\varphi$ is a subalgebra of  $B$.
\end{example}
To introduce the quotient concept for triassociative algebra we give the below definition.
\begin{definition}
Let $(A, \dashv, \perp, \vdash)$ be a triassociative algebra. A subalgebra $I$ of $A$ is said to be a :\\
i) left  ideal of $A$ if  $x\dashv y\in I, x\perp y\in I$ and $x\vdash y\in I$, for any $x\in I, y\in A$.\\
ii) right ideal of $A$ if  $y\dashv x\in I, y\perp x\in I$ and $y\vdash x\in I$, for all $x\in I, y\in A$.\\
iii) two sided ideal of $A$ if $x\ast y\in I$ and $y\ast x\in I$ for all $x\in A, y\in I$ with $\ast=\dashv, \perp, \vdash$ ; that is $I$ is both
left and right ideals.
\end{definition}
\begin{example}
 i) Obviously $I=\{0\}$ and $I=A$ are two-sided ideals.\\
ii) If $\varphi : A\rightarrow B$ is a morphism of triassociative algebras, the kernel $Ker\varphi$ is a two sided ideal in $A$.
iii) $Ann(A)=\{x\in A| x\ast y=y\ast x=0, \ast=\dashv, \perp, \vdash, x, y\in A\}$ is an ideal in $A$.
\end{example}
\begin{proposition}\label{P3}
Let $(A, \dashv, \perp, \vdash)$ be a triassociative algebra and $I$ be a two sided ideal of $A$.
 Then, $A/I$ is a triassociative dialgebra with
$$
\overline{x}\;\overline{\dashv}\;\overline{y}:={x\dashv y}+I,\quad \overline{x}\;\overline{\perp}\;
\overline{y}:={x\perp y}+I\quad \mbox{and}\quad
 \overline{x}\;\overline{\vdash}\;\overline{y}:={x\vdash y}+I,
$$
 for all $\overline{x}, \overline{y}\in A/I.$
\end{proposition}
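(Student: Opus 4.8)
The plan is to argue in two stages: first show that the three proposed operations descend to well-defined bilinear maps on $A/I$, and then verify the eleven axioms of Definition \ref{tad} for the quotient by transporting them from $A$ through the canonical projection $\pi : A \to A/I$, $x \mapsto \overline{x} = x + I$.

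For the first stage, I would fix $\ast \in \{\dashv, \perp, \vdash\}$ and take alternative representatives $x' \in \overline{x}$, $y' \in \overline{y}$, so that $x - x' \in I$ and $y - y' \in I$. The decomposition
$$x \ast y - x' \ast y' = (x - x') \ast y + x' \ast (y - y')$$
then shows the difference lies in $I$: the first summand because $I$ is a left ideal (it absorbs $x - x' \in I$ on the left), the second because $I$ is a right ideal (it absorbs $y - y' \in I$ on the right). Hence $\overline{x}\,\overline{\ast}\,\overline{y} := \overline{x \ast y}$ is well defined and bilinear, and by construction $\pi$ satisfies $\pi(x \ast y) = \pi(x)\,\overline{\ast}\,\pi(y)$ for each of the three products.

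For the second stage, since $\pi$ is surjective every triple of elements of $A/I$ has the form $(\overline{x}, \overline{y}, \overline{z})$ for some $x,y,z \in A$, and each of the identities (1)--(11) becomes a one-line consequence of the corresponding identity in $A$; for instance, for axiom (4),
$$(\overline{x}\,\overline{\dashv}\,\overline{y})\,\overline{\dashv}\,\overline{z} = \overline{(x\dashv y)\dashv z} = \overline{x\dashv(y\vdash z)} = \overline{x}\,\overline{\dashv}\,(\overline{y}\,\overline{\vdash}\,\overline{z}),$$
and the remaining ten are of exactly the same form. I do not expect any genuine obstacle: the argument is essentially formal. The only points that deserve attention are that the well-definedness step really uses \emph{both} the left- and the right-ideal conditions on $I$, and that one should check, without skipping cases, that all eleven identities survive passage to the quotient. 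It is worth recording afterwards that $\pi$ is then a surjective morphism of triassociative algebras with $\ker\pi = I$.
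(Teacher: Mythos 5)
Your proposal is correct and follows essentially the same route as the paper: the paper verifies a representative axiom (its identity (10)) by substituting cosets and invoking the corresponding identity in $A$, exactly your ``transport through $\pi$'' step, and declares the remaining axioms analogous. The only difference is that you make the well-definedness of the quotient products explicit via the decomposition $x\ast y - x'\ast y' = (x-x')\ast y + x'\ast(y-y')$, a point the paper leaves tacit; this is a welcome addition rather than a divergence.
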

\begin{proof}
We only prove one axiom, the other being done similarly. For any $x, y, z \in {A}$, we have
\begin{eqnarray}
 (\overline{x}\dashv \overline{y})\perp \overline{z}
&=&((x+I)\dashv (y+I))\perp(z+I)=(x\dashv y+I)\perp(z+I)=(x\dashv y)\perp z+I\nonumber\\
&\stackrel{(10)}{=}&x\perp(y\vdash z)+I=(x+I)\perp(y\vdash z+I)\nonumber\\
&{=}&(x+I)\perp((y+I)\vdash (z+I))\nonumber\\
&=&\overline{x}\perp (\overline{y}\vdash \overline{z}).\nonumber
\end{eqnarray}
This ends the proof.
\end{proof}

\begin{proposition}
Let $({A},  \dashv_{A}, \perp_A, \vdash_{A})$ and $({B}, \dashv_{B}, \perp_B, \vdash_{B})$ be two 
 triassociative algebras. Then, the multiplications
\begin{eqnarray}
 (a_1+b_2)\dashv(a_2+b_2) &:=&a_1\dashv_{A}a_2+b_2\dashv_{B}b_2,\nonumber\\
(a_1+b_1)\perp(a_2+b_2) &:=&a_1\perp_{A}a_2+b_{A}\perp_{B}b_2,\nonumber\\
(a_1+b_1)\vdash(a_2+b_2) &:=& a_1\vdash_{A}a_2+b_{A}\vdash_{B}b_2,\nonumber
\end{eqnarray}
give a triassociative algebra structure to $A\oplus B$. 
Moreover, if $\xi : {A}\rightarrow {B}$  is a linear map, then
$$ \xi : ({A}, \dashv_{A}, \perp_A, \vdash_{A}) 
\rightarrow({B}, \dashv_{B}, \perp_B, \vdash_{B})$$
 is a morphism if and only if its graph  $\Gamma_\xi=\{(x, \xi(x)), x\in A\}$
 is a triassociative subalgebra of $({A}\oplus{B}, \dashv, \perp, \vdash)$.
\end{proposition}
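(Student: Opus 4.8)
The plan is to split the statement in two and handle each half by a componentwise verification. For the direct sum being a triassociative algebra, I would check each of the eleven axioms (1)--(11) of Definition \ref{tad} on arbitrary elements $a_i+b_i$ with $a_i\in A$ and $b_i\in B$. Since every axiom is multilinear and the three products on $A\oplus B$ are defined coordinatewise, both sides of a given axiom split as a sum of an element of $A$ and an element of $B$, and the identity reduces to the pair consisting of that same axiom in $(A,\dashv_A,\perp_A,\vdash_A)$ and in $(B,\dashv_B,\perp_B,\vdash_B)$. As a sample computation, for axiom (1),
\begin{eqnarray}
\big((a_1+b_1)\dashv(a_2+b_2)\big)\dashv(a_3+b_3)
&=&(a_1\dashv_A a_2)\dashv_A a_3+(b_1\dashv_B b_2)\dashv_B b_3\nonumber\\
&\stackrel{(1)}{=}& a_1\dashv_A(a_2\dashv_A a_3)+b_1\dashv_B(b_2\dashv_B b_3)\nonumber\\
&=&(a_1+b_1)\dashv\big((a_2+b_2)\dashv(a_3+b_3)\big),\nonumber
\end{eqnarray}
and the other ten go through verbatim with the appropriate products; I would write out one or two of the mixed ones (say (6) or (10)) and leave the rest to the reader.

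For the graph characterization, I would first note that $\Gamma_\xi$ is a linear subspace of $A\oplus B$ exactly because $\xi$ is linear, so the only content is stability under the three products. If $\xi$ is a morphism, then for $x,y\in A$ the defining formulas give
\[
(x,\xi(x))\dashv(y,\xi(y))=\big(x\dashv_A y,\ \xi(x)\dashv_B\xi(y)\big)=\big(x\dashv_A y,\ \xi(x\dashv_A y)\big)\in\Gamma_\xi,
\]
using the morphism property for $\dashv$, and likewise for $\perp$ and $\vdash$; hence $\Gamma_\xi$ is a subalgebra. Conversely, if $\Gamma_\xi$ is a subalgebra then $(x,\xi(x))\dashv(y,\xi(y))=\big(x\dashv_A y,\ \xi(x)\dashv_B\xi(y)\big)$ must lie in $\Gamma_\xi$, which by definition of $\Gamma_\xi$ forces the second coordinate to equal $\xi$ applied to the first, i.e. $\xi(x)\dashv_B\xi(y)=\xi(x\dashv_A y)$; doing this for the other two products shows $\xi$ is a morphism.

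I do not expect a genuine obstacle: both parts are pure bookkeeping, the key observation being simply that the products on $A\oplus B$ act independently on the two summands. The only point requiring a little care is transcription, since the displayed multiplication formulas in the statement carry a few index typos (for instance $b_2\dashv_B b_2$ and the symbol $b_A$); I would use the intended coordinatewise definition $(a_1+b_1)\ast(a_2+b_2):=a_1\ast_A a_2+b_1\ast_B b_2$ for each $\ast\in\{\dashv,\perp,\vdash\}$ throughout.
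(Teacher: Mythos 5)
Your proof is correct and is exactly the routine componentwise verification the paper has in mind; indeed the paper omits a proof of this proposition altogether, treating it as immediate from Definition \ref{tad}. You were also right to read the displayed formulas as the coordinatewise products $(a_1+b_1)\ast(a_2+b_2)=a_1\ast_A a_2+b_1\ast_B b_2$, since the subscripts $b_2\dashv_B b_2$ and $b_A$ in the statement are typos.
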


Now, we introduce action for triassociative algebras.
\begin{definition}\label{taa}
Let $({A},  \dashv_{A}, \perp_A, \vdash_{A})$ and $({B}, \dashv_{B}, \perp_B, \vdash_{B})$ be two 
 triassociative algebras. An action of $B$ on $A$ is the given of six bilinear maps
\begin{eqnarray}
 \mu_1^\dashv;\ \mu_1^\perp;\ \mu_1^\vdash : A\times B\rightarrow A;
\quad \mu_2^{\dashv};\ \mu_2^\perp;\ \mu_2^\vdash : B\times A\rightarrow A \nonumber
\end{eqnarray}
 such that 66 equalities hold, which are obtained from the equalities in Definition \ref{tad} by taking one variable in $A$
 and two variables in $B$ (33 equalities), and one variable in $B$ and two variables in $A$ (33 more equalities).
\end{definition}

\begin{example}
1) Any $B$-bimodule (\cite{EBA})  $(\mu_2^\ast\equiv0, \ast=\dashv, \perp, \vdash)$ $A$ is an action of $B$ on $A$.
The action is called trivial if all these six bilinear maps are trivial.\\
2) The three multiplications of any triassociative algebra $A$ may be seen as a structure action of $A$ onto itself.\\
3) If $B$ is a subalgebra of a triassociative algebra $A$ (maybe $B=A$) and $I$ is a two-sided ideal in $A$, then the left, middle
 and right products in $A$ yield an action of $B$ on $I$.\\
4) If $I$ is a two sided ideal of $A$, then the left, middle and the right products yield an action of $A$ on $I$. \\
5)
If $ 0\longrightarrow A\stackrel{\sigma}{\longrightarrow}E\stackrel{\pi}{\longrightarrow} B {\longrightarrow}0$ is a split short exact sequence
(\cite{EBA}) of triassociative algebras,
 that is, there exists a morphism of triassociative algebras $\ \varphi : B\longrightarrow E\ $ such that $\pi \varphi = id_B$, then there is an
 action of the dialgebra $B$ on $A$, defined by taking left, middle and right products in the triassociative algebra $E$ :
\begin{eqnarray}
\mu_1^\ast(x, a) = \sigma(x) \ast_E \varphi(a), \quad \mu_2^\ast(a, x) = \varphi(a)\ast_E \sigma(x), \nonumber
\end{eqnarray}
for any $a\in B, x\in L,$ with $\ast=\dashv, \perp, \vdash$.
\end{example}

\begin{proposition}\label{lbsa}
Let $({A},  \dashv_{A}, \perp_A, \vdash_{A})$, $({B}, \dashv_{B}, \perp_B, \vdash_{B})$ be two 
 triassociative algebras and  together with an action of $B$ on $A$. Then, there is a triassociative algebra
 structure on $A\rtimes B$ which consists with vector space $A\oplus B$ and the multiplications
 \begin{eqnarray}
 (x, a)\triangleleft (y, a)&=&(x\dashv_A y+\mu_1^\dashv(x, b)+ \mu_2^\dashv(a, y), a\dashv_B b)\nonumber,\\
(x, a)\triangle (y, b)&=&(x\perp_A y+\mu_1^\perp(x, b)+ \mu_2^\perp(a, y), a\perp_B b)\nonumber,\nonumber\\
(x, a)\triangleright (y, b)&=&(x\vdash_A y+\mu_1^\vdash(x, b)+ \mu_2^\vdash(a, y), a\vdash_B b)\nonumber\nonumber,
 \end{eqnarray}
for any $x, y\in A, a, b\in B$.
\end{proposition}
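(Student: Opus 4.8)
The plan is to verify by direct computation that the three operations $\triangleleft,\triangle,\triangleright$ on the vector space $A\oplus B$ satisfy each of the eleven axioms (1)--(11) of Definition~\ref{tad}. I would fix a generic triple $X=(x,a)$, $Y=(y,b)$, $Z=(z,c)$ with $x,y,z\in A$ and $a,b,c\in B$ and record once and for all that every one of the three multiplications has the shape $X\bullet Y=\big(x\cdot y+\mu_1(x,b)+\mu_2(a,y),\ a\cdot b\big)$, where $\cdot$ denotes the appropriate one of the three products --- that of $A$ on the first coordinate, that of $B$ on the second --- and $\mu_1,\mu_2$ the appropriate pair among the six action maps. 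Since the $B$-coordinate of a product is simply the corresponding product of the $B$-coordinates, the $B$-component of each of the eleven identities is literally that identity in $(B,\dashv_B,\perp_B,\vdash_B)$, which holds by hypothesis; so everything reduces to the $A$-component.

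For the $A$-component of a fixed axiom $(k)$ I would expand both sides and sort the resulting summands by how many of the \emph{original} arguments $x,y,z,a,b,c$ lie in $B$ --- always counted on the original arguments, even inside a nested term such as $\mu_1^{\perp}(\mu_2^{\dashv}(a,y),c)$, which has exactly one $A$-argument ($y$) and two $B$-arguments ($a,c$). Each side then produces exactly seven $A$-valued summands: one with all three original arguments in $A$, three with two in $A$ and one in $B$, three with one in $A$ and two in $B$; the $A$-component never contains a summand whose three original arguments all lie in $B$. The two ``three in $A$'' summands are equal by axiom $(k)$ in $(A,\dashv_A,\perp_A,\vdash_A)$. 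Pairing the ``two in $A$'' summands according to which original position carries the lone $B$-argument, the three pairs are equal by the three instances of axiom $(k)$ of Definition~\ref{tad} in which that one variable sits in $B$; pairing the ``one in $A$'' summands according to which original position carries the lone $A$-argument, the three pairs are equal by the three instances of axiom $(k)$ in which the other two variables sit in $B$. By Definition~\ref{taa} these six equalities are part of the action of $B$ on $A$, so axiom $(k)$ holds on $A\oplus B$. Running $k$ over $1,\dots,11$, the facts invoked are axiom $(k)$ in $A$, axiom $(k)$ in $B$, and six action identities for each $k$, which together exhaust all $66$ action identities of Definition~\ref{taa}.

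In the write-up I would carry out this expansion in full detail for a single axiom that already mixes all three products --- axiom~(10), $(x\dashv y)\perp z=x\perp(y\vdash z)$, is convenient --- exhibiting the seven summands on each side and matching them one by one, and then remark that axioms (1)--(9) and (11) are handled verbatim in the same way. I would also note in passing that $x\mapsto(x,0)$ makes $A$ a two-sided ideal of $A\rtimes B$ while $a\mapsto(0,a)$ makes $B$ a subalgebra, which is what justifies the semidirect-product notation.

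The only real difficulty is bookkeeping: one must be confident that sorting ``by number of $B$-arguments'' sends every mixed summand to the correct one among the $66$ action identities, and that, over all eleven axioms, no summand is lost or counted twice --- the delicate point being precisely the nested terms, where the image of one action map is fed into another product or into another action map. Conceptually there is nothing deeper here; the construction and its verification are the exact analogues of the semidirect product for associative, diassociative and Leibniz algebras, and the $66$ equalities of Definition~\ref{taa} were imposed precisely so that this bookkeeping closes.
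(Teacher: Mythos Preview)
Your proposal is correct and follows exactly the approach indicated in the paper, which merely states that the result is ``straightforward by using axioms in Definition~\ref{tad} and Definition~\ref{taa}.'' Your seven-term expansion sorted by the number of $B$-arguments is precisely the organized way to carry out that straightforward verification, and your observation that the $66$ action identities are exhausted over the eleven axioms is a nice consistency check that the paper does not spell out.
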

\begin{proof}
 It is straightforward by using axioms in Definition \ref{tad} and Definition \ref{taa}. 
\end{proof}
\begin{definition}\label{crm}
A crossed module of triassociative algebras is a morphism $\varphi : A\rightarrow B$, together with an action of $B$ on $A$  
 such that 
\begin{eqnarray}
1)\;\varphi\mu_1^\ast(x, a)&=& \varphi(x) \ast_B a,\quad 2)\;\varphi\mu_2^\ast(a, x)=a\ast_B \varphi(x),\quad 
3)\;\mu_1^\ast(x, \varphi(y))= x\ast_B y= \mu_2^\ast(\varphi(x), y), \nonumber
\end{eqnarray}
for all $x, y\in A, a\in B$, where $\ast=\dashv, \perp, \vdash$. 
   \end{definition}

\begin{example}
 1) An inclusion $I\hookrightarrow A$ of an ideal of a triassociative algebra $A$ is a crossed module, where the action of $A$ on $I$ is
 given by the three products in $A$\\
2) For any bimodule $A$ over a triassociative algebra $B$    the trivial map $\ 0 : A\to B\ $ is a crossed module with the action of $B$ on 
(abelian i.e. $x\ast y=y\ast x, \ast=\dashv, \perp, \vdash, x, y\in A$) triassociative algebra $A$.\\
3) Any morphism of triassociative algebras $\varphi : A\to B$ with $A$ abelian and $Im\varphi\subset Ann(B)$, provides 
a crossed module with the trivial action of $B$ on $A$.
\end{example}

\begin{definition}
 A morphism of crossed modules of triassociative algebras $(A\stackrel{\varphi}{\longrightarrow}B)\longrightarrow
 (A'\stackrel{\varphi'}{\longrightarrow}B')$
 is a pair $(\alpha,\beta)$, where $\alpha: A\longrightarrow A'$ and $\beta:B\longrightarrow B'$ are morphisms of triassociative algebras 
satisfying :
\begin{eqnarray}
1)\;\varphi'\alpha&=& \beta\varphi, \quad
2)\;\alpha\mu_1^\ast(x,  a)=\mu_1'^\ast(\alpha(x), \beta(a)), \quad
3)\; \alpha\mu_2^\ast(a, x)= \mu_2'^\ast(\beta(a), \alpha(x))\nonumber
\end{eqnarray}
for any $x\in A, a\in B$, where $\ast=\dashv, \perp, \vdash$.
  \end{definition}
Here is the triassociative version of (\cite{RFC}, Example 1.2.56).
\begin{example}
 Let $(A, B, \varphi)$ be a crossed module of triassociative algebras and $E$ be a triassociative algebra.
1) Given a morphism of triassociative algebras $\psi : B \rightarrow E$ such that $\psi\varphi=0$, $(0, \psi) : (A, B, \varphi) \rightarrow
({0}, E, 0)$ is a morphism of crossed modules.\\
2) Given a morphism of triassociative algebras $\psi : E \rightarrow B, (0, \psi) : ({0}, E, 0) \rightarrow (A, B, \varphi)$ is a
morphism of crossed modules.\\
3) Given a morphism of triassociative algebras $\psi : D \rightarrow E, (\psi\varphi, \psi) : (A, B, \varphi) \rightarrow (E, E, id_E)$ is
a morphism of crossed modules. In particular, $(\varphi, id_B ) : (A, B, \varphi) \rightarrow (B, B, id_B)$ is a
morphism of crossed modules.\\
4) Given a morphism of triassociative algebras $\xi : E \rightarrow A, (\xi, \xi\varphi) : (E, E, id_E ) \rightarrow (A, B, \varphi)$ is
a morphism of crossed modules. In particular, $(id_A , \varphi) : (A, A, id_A ) \rightarrow (A, B, \varphi)$ is a morphism of crossed modules.
\end{example}

\begin{proposition}
  Let $\varphi : A\to B\ $ be a crossed module of triassociative algebras. Then, the following conditions hold:
\begin{enumerate}
 \item [1)] $\ker\varphi\subset Ann(A)$. 
\item [2)] The image of $\varphi$ is an ideal in $B$.
\item [3)] $Im\varphi$ acts trivially on the annuhilator $Ann(A)$, and so trivially on $\ker\varphi$.
\end{enumerate}
\end{proposition}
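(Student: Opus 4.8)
The plan is to read all three statements directly off the three families of axioms in Definition \ref{crm}, handling the three products $\ast\in\{\dashv,\perp,\vdash\}$ simultaneously; the argument needs nothing beyond substitution and the bilinearity of the six action maps.

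\textbf{Item 1).} I would take $x\in\ker\varphi$ and an arbitrary $y\in A$. Reading the Peiffer-type identity of axiom 3) as $x\ast y=\mu_2^\ast(\varphi(x),y)$ and using $\varphi(x)=0$ together with bilinearity of $\mu_2^\ast$ gives $x\ast y=\mu_2^\ast(0,y)=0$. Applying the same identity with the roles of $x$ and $y$ interchanged yields $y\ast x=\mu_1^\ast(y,\varphi(x))=\mu_1^\ast(y,0)=0$. Since this holds for $\ast=\dashv,\perp,\vdash$ and for every $y\in A$, we conclude $x\in Ann(A)$, i.e. $\ker\varphi\subset Ann(A)$.

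\textbf{Item 2).} First I would note that $Im\varphi$ is a triassociative subalgebra of $B$, since $\varphi$ is a morphism: $\varphi(x)\ast_B\varphi(y)=\varphi(x\ast y)\in Im\varphi$. Then, for $b\in B$ and $x\in A$, axiom 1) gives $\varphi(x)\ast_B b=\varphi\mu_1^\ast(x,b)\in Im\varphi$, and axiom 2) gives $b\ast_B\varphi(x)=\varphi\mu_2^\ast(b,x)\in Im\varphi$. As this covers all three products on both sides, $Im\varphi$ is a two-sided ideal of $B$ in the sense of the ideal definition recalled above.

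\textbf{Item 3).} The action of an element $\varphi(y)\in Im\varphi$ on $z\in Ann(A)$ is computed once more through axiom 3): the left action $\mu_2^\ast(\varphi(y),z)$ equals $y\ast z$ and the right action $\mu_1^\ast(z,\varphi(y))$ equals $z\ast y$, and both vanish because $z\in Ann(A)$, for each $\ast=\dashv,\perp,\vdash$. Hence $Im\varphi$ acts trivially on $Ann(A)$; and since $\ker\varphi\subset Ann(A)$ by item 1), these same action maps vanish on $\ker\varphi$ as well, which is the last assertion. The only genuine difficulty here is bookkeeping — keeping straight which of $\mu_1^\ast,\mu_2^\ast$ models the left and which the right action, and invoking the matching half of each axiom — but the underlying computation is a one-line substitution in every case.
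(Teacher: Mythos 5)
Your proof is correct and is exactly the routine verification the paper leaves implicit (its proof is just a citation of the defining conditions): item 1) and 3) follow from the Peiffer-type identity $\mu_1^\ast(x,\varphi(y))=x\ast y=\mu_2^\ast(\varphi(x),y)$ and bilinearity, and item 2) from conditions 1) and 2) of Definition of crossed modules together with $\varphi$ being a morphism. No gaps; the approach matches the paper's intended argument.
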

\begin{proof}
 It comes from Definition \ref{taa}.
\end{proof}

\begin{proposition}
Let $A$ and $B$ be two triassociative algebras. We have :
\begin{enumerate}
 \item [1)]  A morphism  $\varphi: A\to B$ of triassociative algebras is a crossed module if and only if the maps
$$
(\varphi, id_B) : A\rtimes B\ \longrightarrow B\rtimes B  
\quad\mbox{and}\quad
(id_A, \varphi) : A\rtimes A \longrightarrow A\rtimes B
$$
are morphism of triassociative algebras.
\item [2)] If $\varphi : A\to B$ is a crossed module of triassociative algebras. Then the map
$$ A\rtimes B \longrightarrow\ A\rtimes B,\quad (x,a)\mapsto (-x, \varphi(x) +a)$$ 
is a morphism of triassociative algebras.
\end{enumerate}
\end{proposition}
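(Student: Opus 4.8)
The plan is to unwind both statements directly through the explicit multiplication formulas of the semidirect products in Proposition \ref{lbsa}, keeping in mind that $A\rtimes A$ and $B\rtimes B$ are built from the structure action of an algebra on itself (item 2) of the Example following Definition \ref{taa}), whereas $A\rtimes B$ carries the given action of $B$ on $A$. Because the three products $\dashv,\perp,\vdash$ enter the semidirect product construction in exactly the same shape, it is enough to carry out each verification for one generic product $\ast\in\{\dashv,\perp,\vdash\}$ and observe that the remaining two are obtained by replacing $\ast$ throughout.

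For part 1), first I would write $(\varphi,id_B)(x,a)=(\varphi(x),a)$ and expand both sides of
$$(\varphi,id_B)\big((x,a)\ast(y,b)\big)=(\varphi,id_B)(x,a)\ast(\varphi,id_B)(y,b).$$
Applying $\varphi$ componentwise on the left and using that $\varphi$ is already a triassociative morphism ($\varphi(x\ast_A y)=\varphi(x)\ast_B\varphi(y)$), the first component becomes $\varphi(x)\ast_B\varphi(y)+\varphi\mu_1^\ast(x,b)+\varphi\mu_2^\ast(a,y)$, while on the right, computed with the structure action of $B$ on itself, it equals $\varphi(x)\ast_B\varphi(y)+\varphi(x)\ast_B b+a\ast_B\varphi(y)$; the $B$-components coincide automatically. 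Matching the two term by term, and specializing $a=0$ or $b=0$ for the converse, shows that $(\varphi,id_B)$ is a morphism exactly when $\varphi\mu_1^\ast(x,b)=\varphi(x)\ast_B b$ and $\varphi\mu_2^\ast(a,y)=a\ast_B\varphi(y)$, i.e. conditions 1) and 2) of Definition \ref{crm}. The same expansion for $(id_A,\varphi)(x,y)=(x,\varphi(y))$, where $A\rtimes A$ now carries the structure action of $A$, forces $\mu_1^\ast(x,\varphi(y))=x\ast_A y=\mu_2^\ast(\varphi(x),y)$, which is condition 3). Hence both maps are morphisms of triassociative algebras if and only if 1), 2) and 3) hold, i.e. if and only if $\varphi$ is a crossed module.

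For part 2), assume $\varphi$ is a crossed module and put $f(x,a)=(-x,\varphi(x)+a)$. I would compute $f\big((x,a)\ast(y,b)\big)$ and $f(x,a)\ast f(y,b)$ directly from the formulas of Proposition \ref{lbsa}, using bilinearity of $\mu_1^\ast,\mu_2^\ast,\ast_A,\ast_B$. In the $B$-component, conditions 1) and 2) turn $\varphi\mu_1^\ast(x,b)+\varphi\mu_2^\ast(a,y)$ into $\varphi(x)\ast_B b+a\ast_B\varphi(y)$, so the $B$-component of $f\big((x,a)\ast(y,b)\big)$ is $(\varphi(x)+a)\ast_B(\varphi(y)+b)$, which is precisely the $B$-component of $f(x,a)\ast f(y,b)$. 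In the $A$-component, expanding $(-x)\ast_A(-y)+\mu_1^\ast(-x,\varphi(y)+b)+\mu_2^\ast(\varphi(x)+a,-y)$ and using condition 3) to replace both $\mu_1^\ast(x,\varphi(y))$ and $\mu_2^\ast(\varphi(x),y)$ by $x\ast_A y$ yields $x\ast_A y-x\ast_A y-\mu_1^\ast(x,b)-x\ast_A y-\mu_2^\ast(a,y)=-x\ast_A y-\mu_1^\ast(x,b)-\mu_2^\ast(a,y)$, matching the $A$-component of $f\big((x,a)\ast(y,b)\big)$. Running this for $\ast=\dashv,\perp,\vdash$ completes the proof; one may add the remark that $f\circ f=\mathrm{id}_{A\rtimes B}$, so $f$ is in fact an involutive automorphism.

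I do not anticipate any genuine obstacle here: everything reduces to substituting the semidirect product formulas and reading off the crossed module axioms. The only care needed is bookkeeping — tracking which action (the structure action, or the given action of $B$ on $A$) governs each of the three semidirect products, and, in part 2), handling the signs in the $A$-component so that the triple occurrence of $x\ast_A y$ collapses correctly to $-x\ast_A y$.
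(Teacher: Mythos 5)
Your proposal is correct and follows exactly the route the paper intends: the paper's proof is just the remark that the statement ``follows from Definition \ref{crm}'', i.e.\ the direct unwinding of the semidirect product formulas of Proposition \ref{lbsa} against the crossed module axioms, which is precisely what you carry out (including the correct sign bookkeeping in part 2 and the specializations $a=0$, $b=0$ for the converse in part 1). No gaps; your added observation that $f$ is an involutive automorphism is a correct bonus not stated in the paper.
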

\begin{proof}
 It follows from Definition \ref{crm}.
\end{proof}

The proof of below theorem follows from direct computation.
\begin{proposition}
Let $\varphi : A\to B\ $ be a crossed module of triassociative algebras with action of $B$ on $A$ defined by $\mu_1^\ast$ and $\mu_2^\ast$. 
Let $\beta$ an injective averaging operator on both $A$ and $B$, and  commutes with $\varphi$. Then,
$\varphi : A_\beta\to B_\beta\ $ is a crossed module of triassociative algebras with action of $B_\beta$ on $A_\beta$ defined by 
\begin{eqnarray}
 \mu_{1, \beta}^\ast(x, a)=\mu_1^\ast(\beta(x), a), \quad \mu_{2, \beta}^\ast(a, x)=\mu_2^\ast(a, \beta(x)), \nonumber
\end{eqnarray}
for any $x\in A, a\in B, \ast=\dashv, \perp, \vdash$.
\end{proposition}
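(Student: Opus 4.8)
The plan is to verify, in turn, the three ingredients of Definition \ref{crm} for the triple $(A_\beta,B_\beta,\varphi)$: that $\varphi$ is a morphism of triassociative algebras $A_\beta\to B_\beta$, that the maps $\mu_{1,\beta}^\ast,\mu_{2,\beta}^\ast$ form an action of $B_\beta$ on $A_\beta$ in the sense of Definition \ref{taa}, and that the three relations $1)$--$3)$ of Definition \ref{crm} hold. Here $A_\beta$ is the triassociative algebra produced from the injective averaging operator $\beta$ by the theorem above, with products $x\ast_{A_\beta}y=\beta(x)\ast_A y$ for $\ast\in\{\dashv,\perp,\vdash\}$, and likewise for $B_\beta$; throughout I will freely use the defining averaging identity $\beta(u)\ast\beta(v)=\beta(\beta(u)\ast v)=\beta(u\ast\beta(v))$ in $A$ and in $B$, together with $\beta\varphi=\varphi\beta$.

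That $\varphi$ is a morphism $A_\beta\to B_\beta$ is a one-line check: $\varphi(x\ast_{A_\beta}y)=\varphi(\beta(x)\ast_A y)=\varphi(\beta(x))\ast_B\varphi(y)=\beta(\varphi(x))\ast_B\varphi(y)=\varphi(x)\ast_{B_\beta}\varphi(y)$, using that $\varphi$ is a morphism for the original products and commutes with $\beta$. For the action, one must run through the $66$ identities of Definition \ref{taa} written for $\mu_{i,\beta}^\ast$ on $A_\beta,B_\beta$. I would handle them exactly as in the proof of the averaging-operator theorem above: after substituting the definitions of $\ast_{A_\beta},\ast_{B_\beta},\mu_{i,\beta}^\ast$, each $\beta$ created by a twisted product is pulled onto a single variable by the averaging identity (in $A$ on slots valued in $A$, in $B$ on slots valued in $B$), which reduces the identity to one of the original action identities for $\mu_i^\ast$. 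When this leaves the two sides with $\beta$ sitting on different variables, I would apply $\beta$ to the whole equation, note that one more use of the averaging identity brings both sides to a common expression, and then cancel the outer $\beta$ by injectivity.

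Finally, the three crossed-module relations. Relation $1)$ is immediate: $\varphi\mu_{1,\beta}^\ast(x,a)=\varphi\mu_1^\ast(\beta(x),a)=\varphi(\beta(x))\ast_B a=\beta(\varphi(x))\ast_B a=\varphi(x)\ast_{B_\beta}a$, and the $\mu_1$-half of relation $3)$ is equally immediate: $\mu_{1,\beta}^\ast(x,\varphi(y))=\mu_1^\ast(\beta(x),\varphi(y))=\beta(x)\ast_A y=x\ast_{A_\beta}y$, both using the original relations of Definition \ref{crm}. For relation $2)$ and the $\mu_2$-half of relation $3)$, the original relations give $\varphi\mu_{2,\beta}^\ast(a,x)=a\ast_B\varphi(\beta(x))=a\ast_B\beta(\varphi(x))$ and $\mu_{2,\beta}^\ast(\varphi(x),y)=x\ast_A\beta(y)$, so it remains to see $a\ast_B\beta(\varphi(x))=\beta(a)\ast_B\varphi(x)$ and $x\ast_A\beta(y)=\beta(x)\ast_A y$; each follows by applying $\beta$, observing that the averaging identity collapses both sides to $\beta(a)\ast_B\beta(\varphi(x))$ (resp. $\beta(x)\ast_A\beta(y)$), and cancelling by injectivity of $\beta$ — this is the one place where injectivity is genuinely used. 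The only real difficulty is organizational: tracking, across the $66$ action identities, which variable each leftover $\beta$ lands on and in which of $A$ or $B$ the averaging identity must be invoked before cancelling.
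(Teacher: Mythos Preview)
The paper offers no proof beyond the line ``follows from direct computation'', so your outline is at least as detailed as the original, and your verification that $\varphi$ is a morphism $A_\beta\to B_\beta$ and that conditions $1)$--$3)$ of Definition~\ref{crm} hold is correct and cleanly explained; in particular, your use of injectivity to obtain $u\ast\beta(v)=\beta(u)\ast v$ from $\beta(u\ast\beta(v))=\beta(u)\ast\beta(v)=\beta(\beta(u)\ast v)$ is exactly the right move.

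There is, however, a genuine gap in your treatment of the action axioms. You write that ``each $\beta$ created by a twisted product is pulled onto a single variable by the averaging identity (in $A$ on slots valued in $A$, in $B$ on slots valued in $B$)'', but the hypothesis only says $\beta$ is averaging for the \emph{internal} products of $A$ and of $B$; nothing is assumed about how $\beta$ interacts with the action maps $\mu_i^\ast$. Concretely, take the identity $\mu_1^\dashv(\mu_1^\dashv(x,a),b)=\mu_1^\dashv(x,a\dashv_B b)$ with $x\in A$, $a,b\in B$. In the $\beta$-twisted version the left side becomes
\[
\mu_{1,\beta}^\dashv(\mu_{1,\beta}^\dashv(x,a),b)=\mu_1^\dashv\bigl(\beta(\mu_1^\dashv(\beta(x),a)),\,b\bigr),
\]
and you have no tool to simplify $\beta(\mu_1^\dashv(\beta(x),a))$: it is not an internal product in $A$ (the second argument lies in $B$) nor in $B$, so the averaging identity does not apply, and injectivity alone cannot help. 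The same obstruction arises in essentially all $66$ identities that mix $\mu_i^\ast$ with itself or with a product in $B$.

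So either an additional compatibility such as $\mu_i^\ast(\beta(x),\beta(a))=\beta(\mu_i^\ast(\beta(x),a))=\beta(\mu_i^\ast(x,\beta(a)))$ must be added to the hypotheses (equivalently, $\beta\oplus\beta$ should be an averaging operator on the semidirect product $A\rtimes B$), or the action part cannot be verified as stated. The paper's bare ``direct computation'' does not resolve this, and your sketch glosses over precisely the step where it matters.
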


\subsection{(Ternary) Leibniz crossed modules}

\begin{definition}\label{}
Let $\mathcal{L}$ be a vector space over a field $\mathbb{K}$. A (right) Leibniz algebra structure on $\mathcal{L}$ is a  bilinear map 
$[-, -] : \mathcal{L}\otimes \mathcal{L}\rightarrow \mathcal{L}$   satisfying
 \begin{eqnarray}
   [[x, y], z]=[x, [y, z]]+[[x, z], y], \nonumber
 \end{eqnarray}
 for all $x, y, z\in  \mathcal{L}$.
\end{definition}

\begin{example}
1) Consider a three dimensional vector space $L$ with standard basis
$\{e_1 , e_2, e_3\}$. The non-nul brackets given by $[e_1 , e_3]=-2e_1, [e_2, e_2 ]= e_1, [e_3, e_2 ]=-[e_2, e_3]= e_2$
define a Leibniz algebra structure on $L$.\\
2) See also \cite{JKC, JMC, MM} for other examples.
\end{example}

\begin{definition}\label{le}
Let $\mathcal{L}$ and $\mathcal{P}$ be two Leibniz algebras. An action of $\mathcal{P}$ on $\mathcal{L}$  consists of a pair of two bilinear maps, 
 \begin{eqnarray}
\mu_1 : \mathcal{L}\times \mathcal{P}\rightarrow \mathcal{L}, (l, p)\mapsto \mu_1(l, p)\quad\mbox{and}\quad
\mu_2 : \mathcal{P}\times \mathcal{L}\rightarrow \mathcal{L}, (p, l)\mapsto \mu_2(p, l)\nonumber
\end{eqnarray}
 such that
 \begin{eqnarray}
  \mu_1([l_1,l_2],p) & =& [l_1, \mu_1(l_2,p)] + [\mu_1(l_1,p), l_2],\\
  {[\mu_1(l_1, p), l_2]} & =&  [l_1, \mu_2(p,l_2)] + \mu_1([l_1,l_2], p),\\
  \mu_1(\mu_2(p, l_1), l_2) &=& \mu_2(p,[l_1,l_2] ) +\  [\mu_2(p,l_2),l_1],\\
   {[\mu_2(p,l_2),l]} &=& \mu_2(p_1, \mu_2(p_2,l)) + \mu_1(\mu_2(p_1,l), l_2),\\
   \mu_1(\mu_2(p_1, l), l_2) &=& \mu_2(p_1, \mu_1(l,p_2)) +   \mu_2([p_1,p_2],l),\\
  \mu_1(\mu_1(l, p_1), p_2) \ &=& \mu_1(l,[p_1,p_2]) +  \mu_1(\mu_1(l,p_2), p_1),
\end{eqnarray}
for all $l, l_1, l_2\in \mathcal{L}, p, p_1, p_2\in \mathcal{P}$.
\end{definition}

\begin{definition}
A crossed module of Leibniz algebras (or a leibniz crossed module) is a triple $(\mathcal{L},\mathcal{P}, \varphi)$, where 
$\mathcal{L}$ and $\mathcal{P}$ are Leibniz algebras and $\varphi : \mathcal{L}\rightarrow \mathcal{P}$ is a morphism
 of Leibniz algebras  together with an action of $\mathcal{P}$ on $\mathcal{L}$ such that
\begin{eqnarray}
1)\; \varphi\mu_1(l,{p})= [\varphi(l),{p}],\quad 2)\;\varphi\mu_2({p},l)= [{p},\varphi(l)], \quad
 3)\;\mu_2(\varphi(l_{1}),l_{2})= [l_{1},l_{2}]=\mu_1(l_{1},\varphi(l_{2})), \nonumber
\end{eqnarray}
for all $l, l_{1},l_{2}\in \mathcal{L}, {p}\in\mathcal{P}$.
\end{definition}

\begin{definition}\label{cmm}
A morphism of Leibniz crossed module from $(\mathcal{L}, \mathcal{P}, \varphi)$ to $(\mathcal{L}', \mathcal{P}', \varphi')$ is a pair
   $(\alpha, \beta)$ where $\alpha: \mathcal{L}\longrightarrow \mathcal{L}'$ and 
$\beta: \mathcal{P}\longrightarrow \mathcal{P}'$ are morphisms of Leibniz algebras such that
\begin{equation}
 1)\; \beta\varphi = \varphi'\alpha,\quad 2)\;\alpha \mu_1(l,p) =\mu'_1(\alpha(l), \beta(p)),\quad 3)\; \alpha \mu_2(p,l)
=\mu'_2(\beta(p), \alpha(l)), \nonumber
\end{equation}
for all $l\in \mathcal{L},{p} \in \mathcal{P}$.
\end{definition}
\begin{example}
 See \cite{RFC}, Example 1.2.42.
\end{example}


Now we define ternary Leibniz algebras.
\begin{definition}
 A vector space $\mathcal{L}$ endowed with the ternary bracket $[-, -, -]$ is said to be a (right) ternary Leibniz  algebra if 
 the following identity
\begin{eqnarray}
 [[x, y, z], t, u]=[x, y, [z, t, u]]+[x, [y, t, u], z]+[[x, t, u], y, z]\label{lci}\nonumber
\end{eqnarray}
is satisfyied, for any $x, y, z, t, u\in \mathcal{L}$.
\end{definition}

\begin{example}
1) Any two-dimensional vector space with basis $\{e_1, e_2\}$ is a ternary Leibniz for the ternary bracket given by $[e_1, e_2, e_2]=e_1$
 and $0$ otherwise (cf. \cite{LCG}, Theorem 2.14).\\
2) Let $A$ be an associative algebra and $\beta : A\rightarrow A$ an averaging operator on $A$. Then the bracket
$$[a, b, c] = ab \beta(c)-a\beta(c)b-b\beta(c)a + \beta(c)b a$$
 defines a ternary Leibniz algebra structure on $A$.\\
3) Lie triple systems \cite{WL} and Leibniz triple systems \cite{MO}  ternary Leibniz algebras.
\end{example}

\begin{definition}
Let $(\mathcal{L}, [-, -, -])$ be a ternary Leibniz algebra.
1) A subset $L_0$ of is said to be a subalgebra if it is stable under the bracket i.e.
for any $x, y, z\in L_o, [x, y, z]\in L_o$. \\
2) A subalgebra $I$ of $L$ is called the three-sided ideal of $L$ if $[a, x, y], [x, a, y], [x, y, a]\in I$, for any $x, y\in A, a\in I$.\\
3) A morphism of ternary Leibniz algebras is a linear map preserving the ternary bracket i.e., if $(\mathcal{L}', [-, -, -]')$ is another 
ternary Leibniz algebra, the linear map $\varphi : \mathcal{L}\rightarrow\mathcal{L}'$ is said to be a morphism if
$\varphi([x, y, z])=[\varphi(x), \varphi(y), \varphi'(z)]'$, for all $x, y, z\in \mathcal{L}$.
\end{definition}

\begin{definition}
 Let $(\mathcal{L}, [-, -, -])$ be a ternary Leibniz algebra. The linear map $N: \mathcal{L}\rightarrow \mathcal{L}$ is said to a 
Nijenhuis operator on $L$ if
 \begin{eqnarray}
  [N(x), N(y), N(z)]&=&N\Big([N(x), N(y), z]+[N(x), y, N(z)]+[x, N(y), N(z)]\nonumber\\
&&-N\Big([N(x), y, z]+[x, N(y), z]+[x, y, N(z)]\Big)+N^2([x, y, z])\Big)\nonumber,
 \end{eqnarray}
for all $x, y, z\in \mathcal{L}$.
\end{definition}
\begin{theorem}\label{ed}
Let $(\mathcal{L}, [-, -, -])$ be a ternary Leibniz algebra and $N: \mathcal{L}\rightarrow \mathcal{L}$ be a Nijenhuis operator. Then,
\begin{enumerate}
 \item [1)] $\mathcal{L}$ is Leibniz algebra  with respect to the bracket
 \begin{eqnarray}
 [x, y, z]_N&:=&[N(x), N(y), z]+[N(x), y, N(z)]+[x, N(y), N(z)]\nonumber\\
&&-N([N(x), y, z]+[x, y, N(z)]+[x, N(y), z])+N^2([x, y, z])\nonumber,
 \end{eqnarray}
for all $x, y, z\in \mathcal{L}$.
\item [2)] $N$ is a morphism of  $(\mathcal{L}, [-, -, -]_N)$ onto  $(\mathcal{L}, [-, -, -])$.
\item [3)] $N$ is also a Nijenhuis operator on $(\mathcal{L}, [-, -, -]_N)$.
\end{enumerate}
\end{theorem}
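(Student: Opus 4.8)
The plan is to verify the three assertions in order, treating the first as the crux. For part (1), I would take the candidate bracket $[x,y,z]_N$ and check the ternary Leibniz identity $[[x,y,z]_N,t,u]_N = [x,y,[z,t,u]_N]_N + [x,[y,t,u]_N,z]_N + [[x,t,u]_N,y,z]_N$ directly. Wait --- one should be careful here: the statement says $\mathcal L$ becomes a \emph{Leibniz} algebra (binary) under $[-,-,-]_N$, but the displayed formula defines a \emph{ternary} bracket. This is almost certainly a typo in the statement: the intended conclusion is that $(\mathcal L,[-,-,-]_N)$ is again a \emph{ternary} Leibniz algebra (the standard deformation-by-Nijenhuis phenomenon, exactly parallel to Theorem for triassociative algebras and to the associative/Leibniz cases). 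So I would prove that $(\mathcal L,[-,-,-]_N)$ satisfies the ternary Leibniz identity.

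First I would record the key algebraic fact underlying all Nijenhuis deformations: if one introduces the ``$N$-twisted'' trilinear operation and expands $[N(x),N(y),N(z)]$ using the Nijenhuis relation, one gets precisely $N([x,y,z]_N)$; i.e. $N$ intertwines $[-,-,-]_N$ with $[-,-,-]$ in the sense $N([x,y,z]_N) = [N(x),N(y),N(z)]$. This identity is the engine. I would then substitute $N(x),N(y),N(z),N(t),N(u)$ into the ternary Leibniz identity for $[-,-,-]$, rewrite every term as an $N$ applied to a $[-,-,-]_N$-expression using the intertwining identity repeatedly (peeling off one layer at a time, innermost brackets first), and collect: the original ternary Leibniz identity for $[-,-,-]$ becomes $N^{?}$ applied to the ternary Leibniz identity for $[-,-,-]_N$ evaluated at $x,y,z,t,u$. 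Because this holds for \emph{all} arguments and the deformed identity is multilinear, one extracts that the $[-,-,-]_N$-identity holds (this is the standard ``polynomial identity pulled back along $N$, then specialize'' argument; more cleanly, one checks the identity is in the image of $N$-substitution and uses multilinearity/density so that the bracket relation holds identically). The hard part will be bookkeeping: matching up the many terms that arise from expanding a five-variable identity where the ternary bracket itself is a sum of seven terms. I would organize this by weight (number of $N$'s), since $N$ respects the grading, and verify cancellation weight-by-weight, or alternatively cite that this is the formal consequence of $N$ being Nijenhuis (as is done for the earlier triassociative theorem, whose proof is merely said to ``follow from direct computation'').

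For part (2), I would simply apply the intertwining identity established above: $N([x,y,z]_N) = [N(x),N(y),N(z)]$ is exactly the statement that $N : (\mathcal L,[-,-,-]_N) \to (\mathcal L,[-,-,-])$ is a morphism of ternary Leibniz algebras, so nothing further is needed once part (1) (and the intertwining lemma) is in hand.

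For part (3), I would show $N$ is a Nijenhuis operator for $[-,-,-]_N$ by plugging $[-,-,-]_N$ into the Nijenhuis defining relation and reducing it, via the intertwining identity, to the Nijenhuis relation for $[-,-,-]$, which holds by hypothesis. Concretely, each of the terms $[N(x),N(y),z]_N$, $[N(x),N(y),N(z)]_N$, $N([N(x),y,z]_N)$, $N^2([x,y,z]_N)$, etc., can be rewritten using $N([a,b,c]_N)=[N(a),N(b),N(c)]$ and its iterates so that the entire equation collapses to an identity already known to hold; since $N$ commutes with itself and with the intertwining, the manipulation is formal. Again the only real work is the expansion, and I expect no conceptual obstacle beyond the notational density inherited from the five-term, seven-summand structure of the ternary Leibniz identity.
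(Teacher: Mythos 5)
Your reading of the statement is right: the word ``Leibniz'' in part (1) is a typo for ``ternary Leibniz'', and your part (2) is immediate because the identity $N([x,y,z]_N)=[N(x),N(y),N(z)]$ is literally the paper's Nijenhuis condition rewritten. Your part (3) outline is also sound: expanding $[N(x),N(y),N(z)]_N$ by the definition of $[-,-,-]_N$, every original-bracket term has all three arguments in the image of $N$ (e.g.\ $[N^2x,N(y),N(z)]=N([N(x),y,z]_N)$), so the whole expression collapses to $N$ applied to the deformed-bracket Nijenhuis expression, which is exactly what is to be shown.

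The gap is in part (1), which is the heart of the theorem. Substituting $N(x),N(y),N(z),N(t),N(u)$ into the ternary Leibniz identity for $[-,-,-]$ and peeling off brackets via $[N(a),N(b),N(c)]=N([a,b,c]_N)$ gives only
\begin{equation*}
N\Bigl([[x,y,z]_N,t,u]_N-[x,y,[z,t,u]_N]_N-[x,[y,t,u]_N,z]_N-[[x,t,u]_N,y,z]_N\Bigr)=0 ,
\end{equation*}
i.e.\ the deformed identity holds only modulo $\ker N$. There is no ``multilinearity/density'' argument that removes the outer $N$: the obstruction is not a restriction of the arguments (they range over all of $\mathcal{L}$ already) but a factor of $N$ applied to the defect, and a Nijenhuis operator need not be injective (e.g.\ $N=0$ or any nilpotent Nijenhuis operator). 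So the step ``one extracts that the $[-,-,-]_N$-identity holds'' does not follow, and the fallback of citing the triassociative analogue is not a proof either, since that result is likewise only asserted by direct computation. The argument the paper intends (``long but straightforward'') is the genuine expansion: write out $[[x,y,z]_N,t,u]_N$ and the three right-hand terms entirely in the original bracket using the definition of $[-,-,-]_N$, use the Nijenhuis relation in the form $[N(a),N(b),N(c)]=N([a,b,c]_N)$ to absorb the sub-expressions in which all inner arguments carry an $N$, and check that the remaining terms cancel by the original ternary Leibniz identity; your weight-by-weight bookkeeping would be appropriate for that computation, but that computation is not the one your proposal actually sets up. As written, part (1) is only proved under the additional hypothesis that $N$ is injective, which is not assumed.
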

\begin{proof}
 The proof is long but straightforward.
\end{proof}

The first part of the below lemma can be found in \cite{JMC, BL}. It asserts that one may associate a ternary Leibniz algebra to a Leibniz
 algebra. 
\begin{lemma}\label{llt}
Let $(\mathcal{L}, [-, -])$ be a Leibniz algebra. Then 
$$ T(\mathcal{L})=(\mathcal{L}, \{x, y, z\}:=[x, [y, z]]),$$
is a ternary Leibniz algebra, for  any $x, y, z\in \mathcal{L}$.\\
Moreover, if $(\mathcal{L}', [-, -]')$ is another Leibniz algebra and $\varphi : \mathcal{L}\rightarrow \mathcal{L}'$ a morphism of Leibniz 
algebras, then $\varphi : T(\mathcal{L})\rightarrow T(\mathcal{L}')$ is a morphism of ternary Leibniz algebras.
\end{lemma}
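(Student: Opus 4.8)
The plan is to verify the two assertions of Lemma \ref{llt} by direct substitution into the defining identities, using only the right Leibniz identity for $[-,-]$.

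First I would prove that $T(\mathcal{L})=(\mathcal{L}, \{-,-,-\})$ with $\{x,y,z\}=[x,[y,z]]$ is a ternary Leibniz algebra. To this end I fix $x,y,z,t,u\in\mathcal{L}$ and expand each of the four terms appearing in the ternary Leibniz identity. The left-hand side is
$\{\{x,y,z\},t,u\}=[[x,[y,z]],[t,u]]$.
Writing $w=[y,z]$ and $v=[t,u]$, the (right) Leibniz identity $[[x,w],v]=[x,[w,v]]+[[x,v],w]$ gives
$[[x,[y,z]],[t,u]]=[x,[[y,z],[t,u]]]+[[x,[t,u]],[y,z]]$.
Now I apply the Leibniz identity once more to $[[y,z],[t,u]]$, namely $[[y,z],[t,u]]=[y,[z,[t,u]]]+[[y,[t,u]],z]$, and substitute. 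This expresses the left-hand side as
$[x,[y,[z,[t,u]]]]+[x,[[y,[t,u]],z]]+[[x,[t,u]],[y,z]]$,
which one recognizes termwise as $\{x,y,\{z,t,u\}\}+\{x,\{y,t,u\},z\}+\{\{x,t,u\},y,z\}$. Hence the identity holds. (If the paper's convention requires the "inner" bracket to be fully resolved, the same three applications of the binary Leibniz identity suffice; the bookkeeping is routine.)

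Next I would prove the functoriality statement. Let $\varphi:\mathcal{L}\to\mathcal{L}'$ be a morphism of Leibniz algebras, so $\varphi([a,b])=[\varphi(a),\varphi(b)]'$ for all $a,b$. Then for $x,y,z\in\mathcal{L}$,
$\varphi(\{x,y,z\})=\varphi([x,[y,z]])=[\varphi(x),\varphi([y,z])]'=[\varphi(x),[\varphi(y),\varphi(z)]']'=\{\varphi(x),\varphi(y),\varphi(z)\}',$
which is exactly the condition for $\varphi$ to be a morphism of ternary Leibniz algebras. This part is immediate from the compatibility of $\varphi$ with $[-,-]$ applied twice.

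I do not anticipate a genuine obstacle: the only subtlety is organizing the expansion so that, after exactly three uses of the binary Leibniz identity, the resulting six (or three, depending on how one groups) terms line up with the right-hand side of the ternary Leibniz axiom without sign or ordering errors. I would therefore present the computation carefully term by term, and remark that the converse-type compatibility in the "Moreover" clause needs nothing beyond the defining property of a Leibniz algebra morphism.
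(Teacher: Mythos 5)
Your proposal is correct and follows the same route the paper intends: the paper cites \cite{JMC, BL} for the first assertion and dismisses the second as ``direct computation,'' and your two applications of the right Leibniz identity (plus bilinearity) together with the one-line morphism check are exactly that computation, with the three resulting terms matching $\{x,y,\{z,t,u\}\}+\{x,\{y,t,u\},z\}+\{\{x,t,u\},y,z\}$ as required. (Only a cosmetic remark: your closing comment about ``three uses'' of the binary identity overstates it --- your own expansion needs just two.)
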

\begin{proof}
 The second part comes from direct computation.
\end{proof}

As in the case of (binary) Leibniz braket we have :
\begin{proposition}
Let $\mathcal{L}$ be a $\bf K$-vector space, $[-,-,-]$ and $\{-, -, -\}$ two ternary brackets on $\mathcal{L}$.
Then, $(\mathcal{L},[-,-,-])$ is a right Leibniz algebras if, and only if, $(\mathcal{L}, \{-, -, -\})$ is left ternary Leibniz algebra with 
$\left\lbrace x,y,z\right\rbrace =[z,y,x]$.
\end{proposition}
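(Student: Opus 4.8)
The plan is to prove the statement by a direct translation between the two defining identities, exploiting the fact that reversing the order of arguments in a ternary bracket converts the ``right'' Leibniz identity into the ``left'' one and vice versa. First I would fix notation: given a right ternary Leibniz bracket $[-,-,-]$ on $\mathcal{L}$, define $\{x,y,z\}:=[z,y,x]$, and observe this is an involutive correspondence, so it suffices to prove one implication (the converse follows by applying the same argument to $\{-,-,-\}$, whose reversal is $[-,-,-]$ again).

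Next I would write down explicitly what ``left ternary Leibniz algebra'' means here: the left Leibniz identity is the one in which the outer operator acts on the left, i.e.
\begin{eqnarray}
[t,u,[x,y,z]]=[[t,u,x],y,z]+[x,[t,u,y],z]+[x,y,[t,u,z]],\nonumber
\end{eqnarray}
which is exactly the ``derivation on the left'' version dual to the right identity already stated in the excerpt (where $[[x,y,z],t,u]$ acts by derivation through the first slot). The core computation is then: assume $(\mathcal{L},[-,-,-])$ is right ternary Leibniz, and verify that $\{-,-,-\}$ satisfies the left identity. Substituting $\{a,b,c\}=[c,b,a]$ into the left identity for $\{-,-,-\}$, the left-hand side $\{t,u,\{x,y,z\}\}$ becomes $[[z,y,x],u,t]$, and the three terms on the right become $[z,y,[x,u,t]]$, $[z,[y,u,t],x]$, $[[z,u,t],y,x]$; renaming variables $(z,y,x,u,t)\mapsto(x,y,z,t,u)$, this is precisely the right ternary Leibniz identity for $[-,-,-]$. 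Hence the equivalence holds slot-by-slot.

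The only genuine subtlety — and the step I would be most careful about — is bookkeeping: making sure the ``left'' identity is stated with the operator acting through the slot that is the mirror image (under $i\mapsto 4-i$ on slot indices) of the slot used in the ``right'' identity, so that the single transposition $x\leftrightarrow z$ in the reversal map $\{x,y,z\}=[z,y,x]$ actually carries one identity to the other without any residual permutation of the middle slot. If the conventions for ``left'' and ``right'' were instead set up with the operator always acting through the \emph{same} slot, the reversal map alone would not suffice and one would need an extra symmetry; but with the natural mirrored convention, no further input is needed. I would close by remarking that, exactly as in the binary case (the analogous well-known equivalence between left and right Leibniz algebras via $[x,y]\mapsto[y,x]$), this shows the two theories are isomorphic, and any morphism $\varphi$ is automatically a morphism for the reversed brackets since $\varphi$ commutes with the transposition of arguments.
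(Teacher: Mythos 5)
Your proposal is correct and follows essentially the same route as the paper: a direct term-by-term verification that the substitution $\{x,y,z\}=[z,y,x]$ carries the left ternary Leibniz identity for $\{-,-,-\}$ into the right ternary Leibniz identity for $[-,-,-]$ (the paper simply reads the same computation in the other direction, expanding $\{x,y,\{z,t,u\}\}=[[u,t,z],y,x]$ with the right identity). Since the translation is an exact identity-for-identity correspondence, it yields both implications at once, matching the paper's remark that the converse is proved in the same way.
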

\begin{proof}
For any $x, y, z, t, u\in \mathcal{L}$,
\begin{eqnarray}
 \left\lbrace x,y,\left\lbrace z,t,u\right\rbrace \right\rbrace 
&=& [[u,t,z],y,x]\nonumber\\
&=&[u,t,[z,y,x]]+ [u,[t,y,x],z] + [[u,y,x],t,z]\nonumber\\
&=&\left\lbrace \left\lbrace x,y,z\right\rbrace ,t,u \right\rbrace + \left\lbrace z,\left\lbrace x,y,t\right\rbrace ,u\right\rbrace 
+\left\lbrace z,t,\left\lbrace x,y,u\right\rbrace \right\rbrace.\nonumber
\end{eqnarray}
The converse is proved in the same way. This achieves the proof.
\end{proof}

The below definition is drawn from \cite{BL}.
\begin{definition}
 A linear map $R : \mathcal{L}\rightarrow \mathcal{L}$ on a ternary Leibniz algebra $(\mathcal{L}, [-, -, -]) $ is called a Rota-Baxter operator
 of weight $\lambda\in\mathbb{K}$, if
for any $x, y, z\in \mathcal{L}$,
\begin{eqnarray}
 &&[R(x), R(y), R(z)]=R\Big([R(x), R(y), z]+[R(x), y, R(z)]+[x, R(y), R(z)]\nonumber\\
&&\qquad\qquad\qquad\qquad+\lambda[R(x), y, z]+\lambda[x, R(y), z]+\lambda[x, y, R(z)]+\lambda^2
[x, y, z]\Big).\label{rot}\nonumber
\end{eqnarray}
\end{definition}
The following next two lemmas are stated in \cite{BL} in the color case.
\begin{lemma}\label{trb}
 Given a Rota-Baxter operator $R: L\rightarrow L$ on a ternary Leibniz algebra $L$, we can make $L$ into another ternary Leibniz
algebra with the bracket
\begin{eqnarray}
 &&[x, y, z]_R=[R(x), R(y), z]+[R(x), y, R(z)]+[x, R(y), R(z)]\nonumber\\
&&\qquad\qquad+\lambda[R(x), y, z]+\lambda[x, R(y), z]+\lambda[x, y, R(z)]+\lambda^2
[x, y, z],\nonumber
\end{eqnarray}
for any $x, y, z\in L$.
\end{lemma}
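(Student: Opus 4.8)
The plan is to verify directly that the new bracket $[-,-,-]_R$ satisfies the (right) ternary Leibniz identity, i.e. that for all $x,y,z,t,u\in L$,
\[
[[x,y,z]_R, t, u]_R = [x,y,[z,t,u]_R]_R + [x,[y,t,u]_R,z]_R + [[x,t,u]_R,y,z]_R.
\]
Since $[-,-,-]_R$ is expressed entirely in terms of the original bracket and $R$, once everything is expanded the statement becomes a (large) polynomial identity in the original bracket, and the essential ingredients will only be the ternary Leibniz identity for $[-,-,-]$ together with the Rota-Baxter relation \eqref{rot} that $R$ satisfies.

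The key organising idea I would use is the standard one for Rota-Baxter deformations: introduce the family of operators $R_\lambda = R + \tfrac{\lambda}{2}\,\mathrm{id}$ or, more cleanly, observe that
\[
[x,y,z]_R = \sum_{\epsilon\in\{0,1\}^3}\lambda^{\,3-|\epsilon|}\,[R^{\epsilon_1}(x),R^{\epsilon_2}(y),R^{\epsilon_3}(z)]\;-\;(\text{the terms with exactly two }R),
\]
wait — more usefully, rewrite the Rota-Baxter identity itself as
\[
[R(x),R(y),R(z)] = R\big([x,y,z]_R\big),
\]
which is exactly the content of \eqref{rot}. This single reformulation is what makes the computation tractable: it says $R$ is ``almost'' a morphism. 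First I would establish this reformulation as a one-line lemma. Then I would compute $[[x,y,z]_R,t,u]_R$ by first applying the definition with the outer slot occupied by $[x,y,z]_R$; each of the seven (or eight) terms carries $R$ applied to various slots, and every occurrence of $R([x,y,z]_R)$ gets replaced by $[R(x),R(y),R(z)]$ via the reformulated identity. After this substitution, every term on both sides of the desired identity is a bracket of the original ternary Leibniz algebra applied to arguments of the form $R^{\epsilon}(\cdot)$, with an explicit power of $\lambda$ out front.

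At that stage the proof reduces to checking that, for each fixed choice of which of the five outermost arguments $x,y,z,t,u$ carry an $R$ (there are $2^5 = 32$ such patterns, grouped by $|\epsilon|$ and giving matching powers of $\lambda$), the corresponding collection of original-bracket terms on the left cancels against those on the right. But for each fixed pattern the cancellation is \emph{precisely} the ternary Leibniz identity for $[-,-,-]$ applied to the arguments $(R^{\epsilon_1}x, R^{\epsilon_2}y, R^{\epsilon_3}z, R^{\epsilon_4}t, R^{\epsilon_5}u)$ — possibly combined additively over sub-patterns when some slot appears both with and without $R$ after expansion. So the whole thing collapses term-by-term. I would present this as: expand, substitute $R([{-},{-},{-}]_R) = [R({-}),R({-}),R({-})]$ wherever it occurs, collect by powers of $\lambda$, and invoke the ternary Leibniz identity for $[-,-,-]$ in each homogeneous piece.

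The main obstacle is purely bookkeeping: the brute expansion of $[[x,y,z]_R,t,u]_R$ has on the order of $7\times 7$ terms before simplification, and matching them against the three terms on the right-hand side (each itself a $7\times 7$ expansion) is error-prone. The conceptual content is light — there is no clever trick beyond the reformulation of the Rota-Baxter axiom — so I expect the write-up to say ``the proof is long but straightforward'' (as the authors do for the analogous Nijenhuis statement in Theorem~\ref{ed}), with perhaps the $\lambda = 0$ case and one representative $\lambda$-homogeneous piece written out to illustrate the pattern. One should also double-check the weight bookkeeping: the $\lambda^2[x,y,z]$ term in $[-,-,-]_R$ means the deformed bracket is genuinely weight-$\lambda$ again only after one verifies that $R$ remains Rota-Baxter of weight $\lambda$ on $(L,[-,-,-]_R)$ — but that second assertion, if needed, follows the same substitution pattern and I would handle it identically.
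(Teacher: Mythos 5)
Your proof is correct, but there is nothing in the paper to compare it with: the authors give no argument for Lemma \ref{trb} at all, simply noting that it (and Lemma \ref{lrb}) is stated in \cite{BL} in the color case. So your direct verification fills in what the paper outsources, and it is essentially the standard computation one would expect to find there. Two small points sharpen your write-up. First, your ``reformulation'' $R\big([x,y,z]_R\big)=[R(x),R(y),R(z)]$ is not a lemma to be established: it is literally the paper's definition of a Rota--Baxter operator of weight $\lambda$ on a ternary Leibniz algebra, read right-to-left, so you may invoke it immediately. Second, your hedge about ``combining additively over sub-patterns'' can be dropped, because the bookkeeping closes pattern-by-pattern with a uniform coefficient: writing $[a,b,c]_R=\sum_{\delta\in\{0,1\}^3,\ \delta\neq(1,1,1)}\lambda^{2-|\delta|}\,[R^{\delta_1}a,R^{\delta_2}b,R^{\delta_3}c]$ and substituting $R\big([{-},{-},{-}]_R\big)=[R(-),R(-),R(-)]$ whenever the outer expansion places $R$ on a nested deformed bracket, one checks that each $5$-pattern $\epsilon\in\{0,1\}^5$ with $\epsilon\neq(1,1,1,1,1)$ occurs exactly once in $[[x,y,z]_R,t,u]_R$ and exactly once in each of the three terms of the right-hand side, always with coefficient $\lambda^{4-|\epsilon|}$, while the all-ones pattern occurs on neither side; hence the difference of the two sides is $\sum_{\epsilon\neq(1,\dots,1)}\lambda^{4-|\epsilon|}$ times the original ternary Leibniz identity evaluated at $(R^{\epsilon_1}x,R^{\epsilon_2}y,R^{\epsilon_3}z,R^{\epsilon_4}t,R^{\epsilon_5}u)$, which vanishes. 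Finally, your closing remark about verifying that $R$ stays Rota--Baxter of weight $\lambda$ on $(L,[-,-,-]_R)$ is not needed for the statement of the lemma (which only asserts that $[-,-,-]_R$ is a ternary Leibniz bracket); it is a separate, true add-on in the spirit of parts 2) and 3) of Theorem \ref{ed}.
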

\begin{lemma}\label{lrb}
Let $(\mathcal{L}, [-, -])$ be a Leibniz algebra and $R : \mathcal{L}\rightarrow \mathcal{L}$ a Rota-Baxter operator  of weight $\lambda$.
 Then, $\mathcal{L}_R=(\mathcal{L}, [-, -]_R)$ is also a Leibniz  algebra,  with 
$$[x, y]_R=[R(x), y]+[x, R(y)]+\lambda[x, y],$$
for all $x, y\in\mathcal{L}$.
Moreover, $R$ is a morphism of  $(\mathcal{L}, [-, -]_R)$ onto  $(\mathcal{L}, [-, -])$.
\end{lemma}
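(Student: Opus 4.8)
The plan is to mimic the proof of Lemma~\ref{trb} (its ternary analogue): a direct substitution into the (right) Leibniz identity, organised so that the bookkeeping stays manageable. The only input beyond the right Leibniz identity for $[-,-]$ is the weight-$\lambda$ Rota--Baxter relation, which here reads $[R(x),R(y)]=R\big([R(x),y]+[x,R(y)]+\lambda[x,y]\big)$ for all $x,y\in\mathcal{L}$.

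First I would isolate the single algebraic identity that drives the whole argument: plugging the definition of $[-,-]_R$ into the right-hand side of the Rota--Baxter relation gives
\[
R([x,y]_R)=R\big([R(x),y]+[x,R(y)]+\lambda[x,y]\big)=[R(x),R(y)].
\]
This is exactly the assertion that $R\colon(\mathcal{L},[-,-]_R)\to(\mathcal{L},[-,-])$ preserves brackets, so the ``moreover'' part of the statement is a free consequence once $[-,-]_R$ is known to be a Leibniz bracket; it is also the identity that will be used repeatedly in the main computation to keep the number of $R$'s under control.

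Next, to verify the right Leibniz identity for $[-,-]_R$, I would expand $[[x,y]_R,z]_R$ by unfolding the outer $[-,-]_R$ once, then substituting $R([x,y]_R)=[R(x),R(y)]$ together with the definition of $[x,y]_R$, so that $[[x,y]_R,z]_R$ becomes a sum of seven terms of the shape $[[a,b],c]$, each slot being one of $x,y,z$ with possibly an $R$ applied and each term carrying a $\lambda$-monomial of degree $0$, $1$ or $2$. I would do the same for $[x,[y,z]_R]_R$ and for $[[x,z]_R,y]_R$. Grouping all of these terms by their total $R$-degree --- which is $0$, $1$ or $2$, since every monomial in $[-,-]_R$ carries at most one $R$ and the identity $R([x,y]_R)=[R(x),R(y)]$ never produces more --- and, within each $R$-degree, according to which of $x,y,z$ carries the $R$'s, one finds that each group has the form $[[a,b],c]-[a,[b,c]]-[[a,c],b]$ for suitable $R$-insertions $a,b,c$ of $x,y,z$, hence vanishes by the right Leibniz identity of $[-,-]$. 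This yields $[[x,y]_R,z]_R=[x,[y,z]_R]_R+[[x,z]_R,y]_R$, as required.

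The argument has no conceptual subtlety; the only real obstacle is the bookkeeping, since the three expansions together produce twenty-one terms spread over three $R$-degrees and several $\lambda$-weights, which must be paired up correctly. A convenient way to organise this is to treat the weight-$0$ case first, where $[x,y]_R=[R(x),y]+[x,R(y)]$ and the cancellation is most transparent, and then observe that reinstating the $\lambda$-terms contributes, in each $R$-degree, only further copies of the plain Leibniz identity with one fewer $R$, which again cancel; alternatively, when $\lambda\neq 0$ one may rescale $R\mapsto\lambda^{-1}R$, which turns it into a weight-$1$ Rota--Baxter operator and rescales $[-,-]_R$ by $\lambda$, reducing the general case to $\lambda\in\{0,1\}$. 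Either route completes the proof.
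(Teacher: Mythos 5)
Your argument is correct: the identity $R([x,y]_R)=[R(x),R(y)]$ gives the morphism claim immediately, and the expansion of $[[x,y]_R,z]_R$, $[x,[y,z]_R]_R$, $[[x,z]_R,y]_R$ into seven groups indexed by which of $x,y,z$ carry $R$ (with $\lambda$-weights $1,\lambda,\lambda^2$) reduces each group to one instance of the right Leibniz identity for $[-,-]$, exactly the standard direct verification. The paper itself gives no proof of Lemma~\ref{lrb} (it cites \cite{BL}, where the color version is proved by the same straightforward computation), so your proposal is essentially the intended argument and fills in the omitted details.
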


\begin{proposition}
Let $(\mathcal{L}, [-, -])$ be a Leibniz algebra. Then, the braket $[-, [-, -]_R]_R$ (Theorem \ref{llt}) and the braket
  $[-, -, -]_R$ (Lemma \ref{trb}) define the same ternary Leibniz algebra structure. More precisely, $[-, -, -]_R=[-, [-, -]_R]_R$.
\end{proposition}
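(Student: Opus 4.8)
The plan is to expand the two trilinear maps in the statement and to check that they coincide term by term; the only ingredient beyond bilinearity will be the morphism property of $R$ recorded in Lemma \ref{lrb}.

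First I would fix notation, writing $\langle x,y,z\rangle:=[x,[y,z]]$ for the ternary bracket of $T(\mathcal{L})$ (Lemma \ref{llt}) and recalling that $[a,b]_R=[R(a),b]+[a,R(b)]+\lambda[a,b]$. The left-hand side $[-,-,-]_R$ is, by Lemma \ref{trb} applied to $\langle-,-,-\rangle$, the sum
\begin{eqnarray}
[x,y,z]_R&=&\langle R(x),R(y),z\rangle+\langle R(x),y,R(z)\rangle+\langle x,R(y),R(z)\rangle\nonumber\\
&&+\lambda\langle R(x),y,z\rangle+\lambda\langle x,R(y),z\rangle+\lambda\langle x,y,R(z)\rangle+\lambda^2\langle x,y,z\rangle,\nonumber
\end{eqnarray}
so that after substituting $\langle u,v,w\rangle=[u,[v,w]]$ it becomes an explicit sum of seven nested brackets of the shape $[R^{\varepsilon_1}(x),[R^{\varepsilon_2}(y),R^{\varepsilon_3}(z)]]$ with coefficients $1$, $\lambda$ or $\lambda^2$.

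Next I would expand the right-hand side: applying the definition of $[-,-]_R$ to the outer bracket gives $[x,[y,z]_R]_R=[R(x),[y,z]_R]+[x,R([y,z]_R)]+\lambda[x,[y,z]_R]$. The one point that is not pure bookkeeping --- and hence the main obstacle --- is the middle summand: instead of expanding $R([y,z]_R)$ by hand, one invokes Lemma \ref{lrb}, which says that $R:(\mathcal{L},[-,-]_R)\to(\mathcal{L},[-,-])$ is a morphism of Leibniz algebras, whence $R([y,z]_R)=[R(y),R(z)]$ and $[x,R([y,z]_R)]=\langle x,R(y),R(z)\rangle$. For the two remaining summands one substitutes $[y,z]_R=[R(y),z]+[y,R(z)]+\lambda[y,z]$ and expands by bilinearity of $[-,-]$. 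Collecting all the contributions and rewriting them through $\langle-,-,-\rangle$ reproduces exactly the seven-term sum found on the left, which gives $[x,y,z]_R=[x,[y,z]_R]_R$ for all $x,y,z\in\mathcal{L}$.

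Finally, to justify the wording ``the same ternary Leibniz algebra structure'', I would remark that $[-,[-,-]_R]_R$ is already a ternary Leibniz bracket by Lemma \ref{llt} applied to the Leibniz algebra $\mathcal{L}_R$ of Lemma \ref{lrb}; since the two trilinear maps have just been shown to be equal, they define one and the same structure on $\mathcal{L}$. The whole computation is routine once one sees that $R([y,z]_R)$ must be simplified via the morphism property of Lemma \ref{lrb} rather than expanded directly.
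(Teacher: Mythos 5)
Your proposal is correct and follows essentially the same route as the paper: expand $[x,[y,z]_R]_R$ using the definition of $[-,-]_R$, simplify the middle term $[x,R([y,z]_R)]$ to $[x,[R(y),R(z)]]$ via the morphism property of $R$ from Lemma \ref{lrb}, and match the resulting seven terms with the expansion of $[-,-,-]_R$ from Lemma \ref{trb} applied to the bracket $[u,[v,w]]$ of Lemma \ref{llt}.
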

\begin{proof}
By Lemma \ref{llt} and Lemma \ref{lrb}, we have for any $x, y, z\in \mathcal{L}$, 
\begin{eqnarray}
[x,y,z]^{R}
&:=& [x,[y,z]_{R} ]_{R}\nonumber\\
&=&[x, [R(y), z]+[y, R(z)]+\lambda[y, z]]_R\nonumber\\
&=&[R(x), [R(y), z]+[y, R(z)]+\lambda[y, z]]+[x, [R(y), R(z)]]+\lambda[x, [R(y), z]+[y, R(z)]+\lambda[y, z]]\nonumber\\
&=&[R(x), [R(y), z]]+[R(x), [y, R(z)]]+\lambda[R(x), [y, z]]+[x, [R(y), R(z)]]\nonumber\\
&&+\lambda[x, [R(y), z]]+\lambda[x, [y, R(z)]]+\lambda^2[x,[y, z]]\nonumber.
\end{eqnarray}
By Lemma \ref{trb},
\begin{eqnarray}
[x,y,z]^{R}&=& [Rx,Ry,z] +[Rx,y,Rz] +[[x,Ry],Rz]\nonumber\\
&& + \lambda ([[Rx,y],z]+ [[x,Ry],z] + [[x,y],Rz]) + \lambda^{2}[x,y,z].\nonumber\\
&=&[x,y,z]_{R}\nonumber.
\end{eqnarray}
Thus $[x,y,z]^{R}=[x,y,z]_{R}$.
\end{proof}
See \cite{JKC} for the original version of the next definition.
\begin{definition}\label{cd}
Let $\mathcal{L}$ and $\mathcal{P}$ be two ternary Leibniz algebras.We say that $\mathcal{P}$ acts on $\mathcal{L}$ if there exist six bilinear
 maps :
\begin{eqnarray}
m_1 : \mathcal{L}\times \mathcal{P}\times\mathcal{P}\longrightarrow \mathcal{L},\quad
m_2 : \mathcal{P}\times \mathcal{L}\times\mathcal{P}\longrightarrow \mathcal{L},\quad
m_3 : \mathcal{P}\times \mathcal{P}\times\mathcal{L}\longrightarrow \mathcal{L},\nonumber\\
m'_1 : \mathcal{P}\times \mathcal{L}\times\mathcal{L}\longrightarrow \mathcal{L},\quad
m'_2 : \mathcal{L}\times \mathcal{P}\times\mathcal{L}\longrightarrow \mathcal{L},\quad
m'_3 : \mathcal{L}\times \mathcal{L}\times\mathcal{P}\longrightarrow \mathcal{L}\nonumber
\end{eqnarray}
such that 
\footnotesize
\begin{eqnarray}
m_1\left(m_1(l,p_1,p_2),\ p_3, p_4\right)&=& m_1\left(l, p_1, [p_1,p_3,p_4]\right)\ 
+\ m_1\left(l,[p_1, p_3,p_4],\ p_2\right)\ +\ m_1\left(m_1(l, p_3, p_4),\ p_1, p_2\right)\\
\ m_1\left(m_2(p_1,l,p_2),\ p_3, p_4\right)&=& m_2\left(p_1,l [p_2,p_3,p_4]\right)\ 
+\ m_2\left(p_1, m_1(l, p_3,p_4),\ p_2\right)\ +\ m_2\left([p_1,p_3, p_4],\ l, p_2\right)\\
\ m_1\left(m_2(p_1,p_2,l),\ p_3, p_4\right)&=& m_3\left( p_1, p_2,m_1(l,p_3,p_4)\right)\ 
+\ m_3\left(p_1,[p_2, p_3,p_4],\ l\right)\ +\ m_3\left([p_1, p_3, p_4),\ p_2, l\right)\\
\ m_2\left([p_1,p_3, p_4],\ l, p_4\right) &=& m_3\left( p_1, p_2,m_2(p_3,l,p_4)\right)\ 
+\ m_2\left(p_1, m_2(p_3,l,p_4)\right)\ +\ m_1\left(m_2(p_1,l, p_4),\ p_2, p_3\right)\\
\ m_3\left([p_1,p_3, p_4],\ p_4,l\right) &=& m_3\left( p_1, p_2,m_3(p_3,p_4,l)\right)\ 
+\ m_2\left(p_1, m_3(p_2,p_4,l)\right)\ +\ m_1\left(m_3(p_1, p_4,l),\ p_2, p_3\right)\\
\ m_1\left(m'_3(l_1,l_2,p_1),\ p_2, p_3\right)&=& m'_3\left(l_1,l_2, [p_1,p_2,p_3]\right)\ 
+\ \left[ l_1, m_1(l_2, p_2,p_3),\ l_2 \right]\ +\ m'_2\left(m_1(l_1,p_2, p_3),\ l_2, p_1\right)\\
\ m_1\left(m'_2(l_1,p_1,l_2),\ p_2, p_3\right)&=& m'_2\left(l_1,p_1,m_1(l_2,p_2,p_3]\right)\ 
+\ m'_2\left[ l_1[p_2, p_3,p_4],\ l_2 \right]\ +\ m'_2\left(m_1(l,p_2, p_3),\  p_1, l_2\right)\\
 m_3\left(m_1(l_1,p_1,p_2),\ l_2, p_3\right)&=& m'_2\left(l_1,p_1,m_2(p_2,l_2,p_3]\right)
\!\!+\!\! m'_3\left[ l_1,m_2(p_1, l_2,p_3),\ p_2 \right]\!\!+\!\!m_1\left(m'_3(l_1,l_2, p_3),\  p_1, p_2\right)\\
 m'_1\left([p_1,p_2, p_3],\ l_1, l_2\right) &=& m_3\left( p_1, p_2,m'_1(p_3,l_1, l_2)\right) 
\!\!+\!\!m_2\left(p_1, m'_1(p_2,l_1, l_2), p_3\right)\!\!+\!\! m_1\left(m'_1(p_3,l_1, l_2),\ p_2, p_3\right)\\
 m'_2\left(m_1(l_1,p_1,p_2),\ p_3, l_2\right)&=& m'_2\left(l_1,p_1,m_3(p_2,p_3,l_2]\right)
\!\!+\!\! m'_3\left[ l_1,m_3(p_1, p_3,l_2),\ p_2 \right]\!\!+\!\! m_1\left(m'_2(l_1, p_3,l_2),\  p_1, p_2\right)\\
 m_1\left(m'_1(p_1,l_1,l_2),\ p_2, p_3\right)&=& m'_1\left(p_1,l_1,m_1(l_2,p_2,p_3]\right) 
\!\!+\!\!m'_1\left(p_1, m_1(l_2,p_2, p_3),\ l_2 \right)\!\!+\!\!m'_1\left( [p_1,p_2, p_3],\ l_1, l_2\right)\\
 m'_3\left(m_2(p_1,l_1,p_2),\ l_1,p_3\right)&=& m'_2\left(p_1,l_1,m_2(p_2,l_2,p_3)\right)
\!\!+\!\! m'_2\left(p_1,m'_3(l_1,l_2, p_3), p_2 \right)\!\!+\!\!m'_3\left(m_2(p_1,l_2,p_3), l_1, p_2\right)\\
 m'_2\left(m_2(p_1,l_1,p_2),\ p_3, l_2\right)&=& m'_1\left(p_1,l_1,m_3(p_2,p_3,l_2)\right)
\!\!+\!\! m_2\left(p_1,m'_2(l_1, p_3,l_2),\ p_2 \right)\!\!+\!\!m'_3\left(m_3(p_1p_3,l_2),  l_1, l_2\right)\\
 m'_3\left(m_3(p_1,p_2,l_1),\ l_2,p_3\right)&=& m_3\left(p_1,p_2,m'_3(,l_1,l_2,p_3)\right)
 \!\!+\!\! m'_1\left(p_1,m_2(p_2,l_2, p_3),\ l_1 \right)\!\!+\!\! m'_2\left(m_2(p_1,l_2,p_3),\ p_2,l_1\right)\\
 m'_2\left(m_3(p_1,p_2,l_1),\ p_3,l_2\right)&=& m_3\left(p_1,p_2,m'_2(,l_1,p_2,l_2)\right)
\!\!+\!\!\ m'_1\left(p_1,m_3(p_2, p_3,l_2),\ l_1 \right) \!\!+\!\! m'_2\left(m_3(p_1,p_2,l_2),\ p_2,l_1\right)
\end{eqnarray}
for all $l_1, l_2, l_3\in\mathcal{L}, p_1, p_2, p_3\in\mathcal{P}$. The $15$ other equalities are obtained by echanging the role
 of $m_i$ and $m_i'$.
\end{definition}

\begin{example}
 If $\mathcal{L}$ is a ternary Leibniz algebra, $\mathcal{P}$ a ternary Leibniz subalgebra of $\mathcal{L}$ and $\mathcal{Q}$ an three-sided ideal
of $\mathcal{P}$, then the Leibniz bracket in $\mathcal{L}$ yields an action of $\mathcal{P}$ on $\mathcal{Q}$.
\end{example}

\begin{definition}\label{cm}
 A crossed module is a morphism of ternary Leibniz algebras $\varphi: \mathcal{L} \longrightarrow \mathcal{P}$ together with an action of
$\mathcal{P}$ on $\mathcal{L}$ satisfying the following conditions:

\begin{enumerate}
\item [1)] $\varphi(m_1(l,p_1,p_2))= [\varphi(l), p_1,p_2], \quad
 \varphi(m_2(p_1,l,p_2)) =  [p_1,\varphi(l), p_2],\\ 
 \varphi(m_3(p_1,p_2,l))  = [ p_1,p_2,\varphi(l)],\quad$
 $\varphi(m'_1(p,l_1,l_2)) = [p,\varphi(l_1), \varphi(l_2)],\\
  \varphi(m'_2(l_1,p,l_2))  = [\varphi(l_1), p, \varphi(l_2)],\quad
   \varphi(m'_3(l_1,l_2,p))  = [\varphi(l_1), \varphi(l_2),p]$.
\item [2)] $\ (l_1,l_2,l_3) =m'_1(\varphi(l_1),l_2,l_3) = m'_2(l_1, \varphi(l_2),l_3)  = m'_3(l_1,l_2,\varphi(l_3)),$\\
  $m'_3(\varphi(l_1),\varphi(l_2), l_3 ) = m_2(\varphi(l_1),l_2\varphi(l_3)) = m_1(l, \varphi(l_2),\varphi(l_3))$.
\item [3)] $m'_3(l_1,l_2, p) = m_2(\varphi(l_1), l_2,p) = m_3(l_1,\varphi(l_2),p)$, \\
  $m'_2(l_1,p,l_2) = m_3(\varphi(l_1),p, l_2) = m_1(l_1,p,\varphi(l_2))$, \\
  $m'_1(p,l_1,l_2) = m_3(p,\varphi(l_1), l_2) = m_2(p,l_1,\varphi(l_2))$,
\end{enumerate}
for any $l, l_1, l_2\in\mathcal{L},  p, p_1, p_2\in\mathcal{P}$.
\end{definition}

\begin{definition}
A morphism of crossed modules $(\mathcal{L}\stackrel{\varphi}{\longrightarrow}\mathcal{P})\longrightarrow 
(\mathcal{L}'\stackrel{\varphi'}{\longrightarrow}\mathcal{P}')$ of ternary Leibniz algebras is a pair $(\alpha, \beta)$,
 where $\alpha :\mathcal{L}\longrightarrow \mathcal{L}'$ 
and $\beta: \mathcal{P}\longrightarrow \mathcal{P}'$ are
 morphisms of ternary Leibniz algebras such that 
\begin{enumerate}
 \item [1)] $\varphi'\alpha = \beta\varphi$,
 \item [2)]
$\alpha m_1(l,p_1,p_2) = n_1(\alpha(l),\beta(p_1),\beta(p_2)), \quad
\alpha m_2(p_1,l,p_2) = n_2(\beta(p_1),\alpha(l),\beta(p_2)),\\
\alpha m_3(l,p_1,p_2) = n_3(\beta(p_1),\beta(p_2),\alpha(l)),\quad
\alpha m'_1(p,l_1,l_2) = n'_1(\beta(p),\alpha(l_1),\alpha(l_2)),\\
\alpha m'_2(l_1,p,l_2) = n'_2(\alpha(l_1),\beta(p),\alpha(l_2)), \quad
\alpha m'_3(l_1,l_2,p) = n'_3(\alpha(l_1),\alpha(l_2),\beta(p))
$
\end{enumerate}
for any $l, l_1, l_2\in\mathcal{L},  p, p_1, p_2\in\mathcal{P}$.
\end{definition}

Here is the  version of (\cite{RFC}, Example 1.2.42).
\begin{example}
 Let $(\mathcal{L}, \mathcal{P}, \varphi)$ be a ternary Leibniz crossed module and $\mathcal{Q}$ a ternary Leibniz algebra.\\
1) Given a ternary Leibniz morphism $\psi : \mathcal{P} \rightarrow \mathcal{Q}$ such that $\psi\varphi=0, (0, \psi) :
 (\mathcal{L}, \mathcal{P}, \varphi) \rightarrow (\{0\}, \mathcal{Q}, 0)$ is a morphism of ternary Leibniz crossed modules.\\
2) Given a ternary Leibniz morphism $\psi : \mathcal{Q} \rightarrow \mathcal{P}, (0, \psi) : (\{0\}, \mathcal{Q}, 0)
\rightarrow (\mathcal{L}, \mathcal{P}, \varphi)$ is a morphism of ternary Leibniz crossed modules.\\
3) Given a ternary Leibniz morphism $\psi : \mathcal{P}\rightarrow \mathcal{Q}, (\psi\varphi, \psi) : (\mathcal{L}, \mathcal{P}, \varphi)
\rightarrow (\mathcal{Q}, \mathcal{Q}, id_\mathcal{Q})$ is a
morphism of ternary Leibniz crossed modules. In particular, $(\varphi, id_\mathcal{P} ) : (\mathcal{L}, \mathcal{P}, \varphi)
\rightarrow (\mathcal{P}, \mathcal{P}, id_\mathcal{P})$ is a morphism of Leibniz crossed modules.\\
4) Given a ternary Leibniz morphism $\xi : \mathcal{Q}\rightarrow \mathcal{L}, (\xi, \varphi\xi) : (\mathcal{Q}, \mathcal{Q}, id_\mathcal{Q})
\rightarrow(\mathcal{L}, \mathcal{P}, \varphi)$ is a morphism of ternary Leibniz crossed modules. 
In particular, $(id_\mathcal{L} , \varphi) : (\mathcal{L}, \mathcal{L}, id_\mathcal{L})\rightarrow
(\mathcal{L}, \mathcal{P}, \varphi)$ is a morphism of ternary Leibniz crossed modules.
\end{example}
\section{From triassociative and Leibniz crossed modules to ternary Leibniz crossed modules}
We give contructions of crossed modules of ternary Leibniz algebras from either crossed modules of Leibniz algebras, crossed module of
 ternary Leibniz algebras or crossed module of triassociative algebras. Some of their propreties are given.

First, let us recall  the below lemma  and remark (see \cite{JMC}) ; It connects triassociative algebras to ternary Leibniz algebras.
\begin{lemma}\label{att}
Given a triassociative algebra $(A, \dashv, \perp, \vdash)$, the bracket  
\begin{eqnarray}
 [x, y, z]:=x\dashv(y\perp z-z\perp y)-(y\perp z-z\perp y)\vdash x,\nonumber
\end{eqnarray}
makes $A$ into a ternary Leibniz algebra $T(A)$, for  all $x, y, z\in A$.
\end{lemma}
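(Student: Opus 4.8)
The plan is to verify directly that the ternary bracket $[x,y,z]:=x\dashv(y\perp z-z\perp y)-(y\perp z-z\perp y)\vdash x$ satisfies the (right) ternary Leibniz identity
\[
[[x,y,z],t,u]=[x,y,[z,t,u]]+[x,[y,t,u],z]+[[x,t,u],y,z].
\]
To keep the bookkeeping manageable, I would first introduce the shorthand $\langle y,z\rangle:=y\perp z-z\perp y$, so that $[x,y,z]=x\dashv\langle y,z\rangle-\langle y,z\rangle\vdash x$. The key structural observation, which I would isolate as a preliminary step, is that by associativity axioms $(1)$–$(11)$ the element $\langle y,z\rangle$ behaves well under all three products: for instance $w\dashv\langle y,z\rangle$, $\langle y,z\rangle\vdash w$, and $w\perp\langle y,z\rangle$ can each be re-expressed using only the axioms, and crucially $\langle t,u\rangle$ commutes past $\dashv$ and $\vdash$ in the sense needed. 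In fact the cleanest route is to recall (or re-derive) that $[x,y,z]=x\dashv\langle y,z\rangle-\langle y,z\rangle\vdash x$ coincides with the bracket $x\ast w - w\ast' x$ where $\ast$ is a fixed associative-type product and $w$ ranges over the associative subalgebra generated by the $\perp$-commutators; this reduces the ternary Leibniz identity to the already-known fact that $(A,\dashv,\vdash)$ together with such a "commutator" construction yields a Leibniz-type bracket.

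Concretely, I would proceed as follows. Step one: using axioms $(4),(5),(6),(7),(9),(10),(11)$, establish the auxiliary identities $x\dashv(\langle y,z\rangle\dashv w)=(x\dashv\langle y,z\rangle)\dashv w$, $(\langle y,z\rangle\vdash w)$-type reductions, and most importantly that $\langle t,u\rangle$ acts on the left via $\perp$ the same way whether it hits $x\dashv\langle y,z\rangle$ or is absorbed into a nested commutator. Step two: expand the left-hand side $[[x,y,z],t,u]$ by substituting $v:=[x,y,z]=x\dashv\langle y,z\rangle-\langle y,z\rangle\vdash x$ into $v\dashv\langle t,u\rangle-\langle t,u\rangle\vdash v$; using the associativity axioms to move $\langle t,u\rangle$ through, this becomes a sum of terms each of the form $x\dashv(\cdots)$ or $(\cdots)\vdash x$ where the $(\cdots)$ involve $\langle y,z\rangle$ and $\langle t,u\rangle$ combined via $\perp$ and the products. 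Step three: expand the three terms on the right-hand side similarly; here $[z,t,u]=z\dashv\langle t,u\rangle-\langle t,u\rangle\vdash z$ gets plugged into the second slot, producing $\langle [z,t,u]\,, \cdot\rangle$ and $\langle\cdot,[z,t,u]\rangle$ which must be expanded using the $\perp$-associativity axiom $(3)$ and the mixed axioms $(8),(9)$. Step four: match the two sides term by term, which amounts to checking that the associative algebra generated by $\perp$ (restricted appropriately) satisfies the Leibniz-type identity for the inner commutator $\langle-,-\rangle$ — i.e., that $\langle y,\langle z,t\rangle\text{-type}\rangle$ combinations telescope correctly.

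The main obstacle I anticipate is purely the combinatorial explosion: each side expands into roughly a dozen monomials in $\dashv,\perp,\vdash$ applied to $x,y,z,t,u$, and making every cancellation explicit requires invoking most of the eleven axioms $(1)$–$(11)$ — in particular the "exotic" ones like $(4)\!:(x\dashv y)\dashv z=x\dashv(y\vdash z)$ and $(10)\!:(x\dashv y)\perp z=x\perp(y\vdash z)$ — in the right order. A slicker alternative that avoids most of this, and which I would mention, is to combine Lemma \ref{llt} with a known functor: since a triassociative algebra yields a Leibniz algebra via $[x,y]:=x\dashv y-y\vdash x$ on the associated structure, and the inner $\langle y,z\rangle$ is itself the $\perp$-commutator giving a Lie/Leibniz bracket, the composite $[x,y,z]=[x,[y,z]_\perp]$-style formula is exactly $T$ applied to a Leibniz algebra, so Lemma \ref{llt} finishes it — but this requires first checking $(A,\dashv,\vdash)$ with the $\perp$-commutator forms a Leibniz algebra, which is itself a short axiom chase. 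I would present the direct verification of one representative axiom in full and assert the rest follow identically, which matches the style used earlier in the paper (e.g.\ Proposition \ref{P3}).
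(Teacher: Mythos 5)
The paper does not prove this lemma at all: it is recalled from \cite{JMC} (Casas, \emph{Trialgebras and Leibniz 3-algebras}), where the proof is exactly the direct verification you outline, namely expanding $[[x,y,z],t,u]$ and the three right-hand terms and cancelling using the eleven trialgebra axioms. So your primary route is the correct (and the standard) one. However, as a proof your text has two real problems. First, the ``slicker alternative'' via Lemma \ref{llt} does not work as stated: Lemma \ref{llt} produces a ternary Leibniz algebra from a \emph{single} Leibniz bracket via $\{x,y,z\}=[x,[y,z]]$, whereas the bracket of the present lemma is $[x,\langle y,z\rangle]_{\mathrm{Lod}}$ with \emph{two different} binary operations: the inner one is the $\perp$-commutator $\langle y,z\rangle=y\perp z-z\perp y$ (a Lie bracket, since $\perp$ is associative), and the outer one is the dialgebra-type bracket $x\dashv a-a\vdash x$. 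There is no reason, and no statement in the paper, that these coincide, so the composite is not $T$ of a Leibniz algebra and Lemma \ref{llt} cannot be invoked. The ``already-known fact'' you appeal to (that $(A,\dashv,\vdash)$ together with the $\perp$-commutator yields a Leibniz-type structure) is precisely what needs proving; establishing the required derivation-type compatibility of the Loday bracket with $\langle-,-\rangle$ uses the mixed axioms $(5),(8)$--$(11)$ and is essentially the same computation you were trying to avoid.

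Second, the direct route is only a plan: the ``term-by-term matching'' in your Steps two--four is the entire content of the lemma and is never carried out, and the auxiliary identities you promise in Step one are stated too vaguely to be checked (``$\langle t,u\rangle$ commutes past $\dashv$ and $\vdash$ in the sense needed'' is not an identity). Also, your closing remark about verifying ``one representative axiom in full and asserting the rest follow identically'' is moot here: unlike Proposition \ref{P3}, a ternary Leibniz algebra has a single defining identity, so there is nothing to sample from --- you must expand both sides of that one identity completely. To make this a proof you should either perform the full expansion (each side is a signed sum of monomials of the form $x\dashv(\cdots)$ and $(\cdots)\vdash x$ with the parenthesized parts built from $\perp$-commutators, reduced via axioms $(1)$--$(11)$), or honestly cite \cite{JMC}, which is what the paper itself does.
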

\begin{remark}
 Given a triassociative algebra $(A, \dashv, \perp, \vdash)$, then A is also a ternary algebra with respect to each of the following brackets 
\begin{eqnarray}
 [x, y, z]_1&:=&x\dashv(y\perp z)-(y\perp z)\vdash x\nonumber\\
 {[x, y, z]_2}&:=&(z\perp y)\vdash x-x\dashv(z\perp y),\nonumber
\end{eqnarray}
for  all $x, y, z\in A$.
\end{remark}

Now, we have :
\begin{theorem}\label{fat}
 Let $({A}, {B}, \varphi)$ be a crossed module  of triassociative algebras with action of $B$ on $A$ defined by 
$\mu_1^\dashv, \mu_1^\perp, \mu_1^\vdash : A\times B\rightarrow A, \mu_2^{\dashv}, \mu_2^\perp, \mu_2^\vdash : B\times A\rightarrow A$.
 Let us define the six bilinear maps : 
\begin{eqnarray}
m_1(x, a, b) &=& \mu_1^\dashv(x, a\perp b-b\perp a ) - \mu_2^\vdash(a\perp b-b\perp a, x),\\
m_2(a, x, b) &=& \mu_2^\dashv(a, \mu_1^\perp(x,b)- \mu_2^\perp(b, x)) - \mu_1^\vdash (\mu_1^\perp(x, b)- \mu_2^\perp(b, x), a),\\
m_3(a, b, x) &=& \mu_2^\dashv(a, \mu_2^\perp(b,x)- \mu_1^\perp(x, b)) - \mu_1^\vdash (\mu_2^\perp(b, x)- \mu_1^\perp(x, b), a),\\
m'_1(c, y, z) &=& \mu_2^\dashv(c, y\perp z-z\perp y ) - \mu_1^{\vdash}(y\perp z-z\perp y, c ),\\
m'_2(y, c, z) &=& y\perp(\mu_2^\perp(c, z) -\mu_1^\perp(z, c)) - (\mu_2^\perp(c, z)-\mu_1^\perp(z, c)) \vdash y, \\
m'_3(y, z, c) &=& y\perp(\mu_1^\perp(z, c) -\mu_2^\perp(c, z)) - (\mu_1^\perp(z, c)-\mu_2^\perp(c, z)) \vdash y, 
\end{eqnarray}
for any $x, y, z\in A,   a, b, c\in B$.
\begin{enumerate}
 \item [1)] Then,
\begin{enumerate}
 \item  we have an action of $T(B)$ on $T(A)$.
\item $(T({A}),T({B}), \varphi)$ is crossed module of ternary Leibniz algebras.
\end{enumerate}
\item [2)] Moreover,  if $({A}', {B}', \varphi')$ is another crossed module of triassociative algebras and $(\alpha, \beta)$
is a morphism of crossed module from $({A}, {B}, \varphi)$ to $({A}', {B}', \varphi')$, then $(\alpha, \beta)$ is a morphism of 
crossed module from $(T({A}),T({B}), \varphi)$ to $(T({A}'),T({B}'), \varphi')$.
\end{enumerate}
\end{theorem}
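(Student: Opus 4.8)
The plan is to reduce everything to the two bridges that are already available: Lemma~\ref{att}, which turns a triassociative algebra into a ternary Leibniz algebra via the bracket $[x,y,z]=x\dashv(y\perp z-z\perp y)-(y\perp z-z\perp y)\vdash x$, and the definitions of actions and crossed modules in Definitions~\ref{taa}, \ref{crm}, \ref{cd}, \ref{cm}. The key observation that makes the six formulas for $m_1,\dots,m'_3$ look natural is that they are exactly what one gets by ``substituting the action into the Lemma~\ref{att} bracket'': if one thinks of $A\rtimes B$ as a single triassociative algebra (Proposition~\ref{lbsa}) and applies the bracket of Lemma~\ref{att} to elements of the form $(x,0)$ and $(0,a)$, then the $A$-component of the resulting ternary bracket, sorted by how many entries lie in $B$, produces precisely $m_1(x,a,b)$, $m_2(a,x,b)$, $m_3(a,b,x)$ (one entry in $A$) and $m_1'(c,y,z)$, $m_2'(y,c,z)$, $m_3'(y,z,c)$ (one entry in $B$). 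This is the conceptual engine; I would state it as the organising principle and then verify the claims it predicts.

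First I would prove part 1(a): that the six maps define an action of $T(B)$ on $T(A)$ in the sense of Definition~\ref{cd}. The cleanest route is to observe that $T(A\rtimes B)$ is a ternary Leibniz algebra by Lemma~\ref{att}, that $T(A)$ sits inside it as the three-sided ideal corresponding to $A\oplus 0$, and that the fifteen-plus-fifteen action axioms of Definition~\ref{cd} are exactly the ternary Leibniz identity of $T(A\rtimes B)$ evaluated on tuples with one or two entries from the $B$-summand and the rest from the $A$-summand, after one checks that the $A$-component of the semidirect bracket reproduces the $m_i,m_i'$ above. Thus the action axioms are \emph{inherited} from the single identity defining $T(A\rtimes B)$; no independent fifteen-identity verification is needed once the semidirect-product bracket computation is in place. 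For part 1(b) I would use Definition~\ref{cm}: condition~1) follows by applying $\varphi$ to each of the six formulas and pushing it through using the crossed-module relations for triassociative algebras (Definition~\ref{crm}, parts 1) and 2)) together with the fact that $\varphi$ is a triassociative morphism, so $\varphi$ of a $\perp$-commutator is the $\perp_B$-commutator of the images; conditions~2) and 3) follow the same way from part~3) of Definition~\ref{crm}, which says $\mu_1^\ast(x,\varphi(y))=x\ast_B y=\mu_2^\ast(\varphi(x),y)$, matching each substituted action term with the corresponding term of the $T(B)$ bracket.

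For part 2, functoriality is essentially formal: given the morphism $(\alpha,\beta)$ of triassociative crossed modules, I would simply apply $\alpha$ to each of the six defining formulas for $m_1,\dots,m_3'$, move $\alpha$ inside using that $\alpha,\beta$ intertwine $\mu_i^\ast$ with $\mu_i'^\ast$ and that $\alpha$ is a triassociative (hence $\perp$-commutator-preserving) morphism, and recognise the result as $n_i(\dots)$ or $n_i'(\dots)$ built from $A',B'$; the compatibility $\varphi'\alpha=\beta\varphi$ is unchanged. The main obstacle I expect is purely bookkeeping rather than conceptual: carefully computing the $A$-component of the semidirect-product triassociative bracket of Proposition~\ref{lbsa}, expanding $x\dashv(y\perp z-z\perp y)-(y\perp z-z\perp y)\vdash x$ on mixed tuples, and matching sign-by-sign with the stated $m_i,m_i'$ — in particular getting the placement of the two ``$\perp$-commutators of action terms'' in $m_2,m_3,m_2',m_3'$ to agree. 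Once that single identification is nailed down, all the action axioms and all the crossed-module relations drop out of the corresponding facts for the triassociative crossed module $(A,B,\varphi)$.
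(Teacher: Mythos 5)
Your route is genuinely different from the paper's: the paper disposes of Theorem \ref{fat} with an unwritten direct verification (``long and tedious, but straightforward''), whereas you factor everything through the semidirect product of Proposition \ref{lbsa} and Lemma \ref{att}, reading the six maps off as the $A$-components of the bracket of $T(A\rtimes B)$ on mixed tuples and inheriting the action axioms of Definition \ref{cd} from the single ternary Leibniz identity there. That is a sound and more economical organisation, and it delivers the later identity $T(A\rtimes B)=T(A)\rtimes T(B)$ as a byproduct without circularity, since you establish the identification by direct computation rather than by quoting Theorem \ref{fat}. The one point you should make explicit is the converse you rely on: that if the Definition \ref{ctl}-type bracket on $T(A)\oplus T(B)$ is ternary Leibniz, then the component maps satisfy all the identities of Definition \ref{cd} (each of those identities is the $\mathcal{L}$-component of the ternary Leibniz identity for a particular distribution of entries); this is the hinge of your ``no fifteen-identity verification needed'' claim. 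Your treatment of the crossed-module conditions and of functoriality (pushing $\varphi$, respectively $(\alpha,\beta)$, through the six formulas using Definition \ref{crm} and the morphism conditions) is what a direct proof would do anyway.

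There is, however, one concrete mismatch you must address. Carrying out your own identification, the $A$-component of $[(y,0),(0,c),(z,0)]$ in $T(A\rtimes B)$ is $y\dashv(\mu_2^\perp(c,z)-\mu_1^\perp(z,c))-(\mu_2^\perp(c,z)-\mu_1^\perp(z,c))\vdash y$, and similarly for $[(y,0),(z,0),(0,c)]$, i.e.\ with $\dashv$ in the first term, whereas the theorem's printed $m'_2$ and $m'_3$ have $y\perp(\cdots)$. So the semidirect-product computation does \emph{not} reproduce ``precisely'' the stated maps for those two. The $\dashv$ version is the correct one: with the printed $\perp$ version, condition 2) of Definition \ref{cm} already fails, because $m'_2(l_1,\varphi(l_2),l_3)$ reduces via Definition \ref{crm} 3) to $l_1\perp(l_2\perp l_3-l_3\perp l_2)-(l_2\perp l_3-l_3\perp l_2)\vdash l_1$, which is not the $T(A)$ bracket $l_1\dashv(l_2\perp l_3-l_3\perp l_2)-(l_2\perp l_3-l_3\perp l_2)\vdash l_1$ in a general triassociative algebra. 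So either record that the statement's $m'_2,m'_3$ contain a typo ($\perp$ where $\dashv$ is meant) and prove the corrected statement, or your plan as written does not literally prove the statement as printed; this is exactly the ``placement'' issue you flagged, but it sits in the statement itself rather than in your bookkeeping.
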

\begin{proof}
  The proof is long and tedious, but straightforward.
\end{proof}

\begin{definition}\cite{JE}\label{ctl}
 Let $\mathcal{L}$ and  $\mathcal{P}$ be two ternary Leibniz algebras with the action of $\mathcal{P}$ on $\mathcal{L}$. 
The semi-direct product $\mathcal{L}\rtimes\mathcal{P}$ is the Leibniz algebra on the vector space $\mathcal{L}\oplus\mathcal{P}$ with the 
bracket
\begin{eqnarray}
 [(l_1, p_1), (l_2, p_2), (l_3, p_3)]&:=&(m_1(l_1, p_1, p_2)+m_2(p_1, l_2, p_3)+m_3(p_1, p_2, l_3)\nonumber\\
&&+m'_1(p_1, l_1, l_2)+m'_2(l_1, p_2, l_3)
+m'_3(l_1, l_2, p_3), [p_1, p_2, p_3]),\nonumber
\end{eqnarray}
for all $l_1, l_2, l_3\in\mathcal{L},   p_1, p_2, p_3\in\mathcal{P}$.
\end{definition}

\begin{theorem}\label{}
 Let $A$ and $B$ be two triassociative algebras together with an action of $B$ on $A$.
Then, $T({A}\rtimes {B})=T({A})\rtimes T({B})$.
\end{theorem}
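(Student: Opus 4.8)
Both $T(A\rtimes B)$ and $T(A)\rtimes T(B)$ are ternary Leibniz algebras carried by the same underlying vector space $A\oplus B$: on one hand $A\rtimes B$ is triassociative by Proposition \ref{lbsa} and $T$ does not change the underlying space (Lemma \ref{att}); on the other hand $T(B)$ acts on $T(A)$ by Theorem \ref{fat}, so $T(A)\rtimes T(B)$ is defined on $A\oplus B$ via Definition \ref{ctl}. Hence it suffices to show that the two ternary brackets coincide. Both are trilinear, so it is enough to verify the identity on triples in which each entry is either of the form $(x,0)$ with $x\in A$ or of the form $(0,a)$ with $a\in B$; there are $2^{3}=8$ such homogeneous types. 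The plan is to evaluate, for each type, the left-hand bracket by substituting the operations $\triangleleft,\triangle,\triangleright$ of $A\rtimes B$ from Proposition \ref{lbsa} into the formula of Lemma \ref{att}, and the right-hand bracket by the semidirect-product formula of Definition \ref{ctl} using the maps $m_1,m_2,m_3,m'_1,m'_2,m'_3$ of Theorem \ref{fat}, and then to compare the results.

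On the left-hand side the computation collapses because the entries are homogeneous: a product of two $B$-entries retains only its $\perp_{B},\dashv_{B},\vdash_{B}$ part, a product of two $A$-entries retains only its $A$-part, and a mixed product returns exactly one of the action maps $\mu^{\ast}_{i}$ (the remaining summands vanish, one argument being $0$); moreover the intermediate element $Y\triangle Z-Z\triangle Y$ is again homogeneous, so the outer products collapse likewise. Feeding this into $X\triangleleft(Y\triangle Z-Z\triangle Y)-(Y\triangle Z-Z\triangle Y)\triangleright X$, one obtains: for the all-$A$ type the value $\bigl([l_1,l_2,l_3]_{T(A)},\,0\bigr)$, for the all-$B$ type the value $\bigl(0,\,[p_1,p_2,p_3]_{T(B)}\bigr)$ (each being Lemma \ref{att} read inside $A$, resp.\ inside $B$), and for each of the six mixed types a value of the form $\bigl(m_i(\cdots),\,0\bigr)$ or $\bigl(m'_i(\cdots),\,0\bigr)$ --- indeed the six maps of Theorem \ref{fat} were manufactured precisely as these six $A$-components. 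On the right-hand side, Definition \ref{ctl} assigns to the all-$A$ triple its $\mathcal L$-bracket $[l_1,l_2,l_3]_{T(A)}$, to the all-$B$ triple its $\mathcal P$-bracket $[p_1,p_2,p_3]_{T(B)}$, and to each mixed triple exactly one of the terms $m_i$ or $m'_i$, with the same distribution over the eight types. Matching the two outcomes type by type gives the equality of the brackets, hence $T(A\rtimes B)=T(A)\rtimes T(B)$.

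The only genuine work lies behind the second paragraph: one must expand each of $\triangleleft,\triangle,\triangleright$ into its $A$-part, its $B$-part and its two $\mu$-terms, insert this into the expression of Lemma \ref{att}, and keep track of which summands survive --- recording in particular that an element such as $\mu_1^{\perp}(x,a)$ lies in $A$, so that any further product involving it is computed entirely inside $A$ and produces no new action term. This is the same expansion --- long and tedious, but straightforward --- already invoked in the proof of Theorem \ref{fat}, and most of it transfers directly here, since recognising the six mixed brackets comes down to re-reading the definitions of $m_1,\dots,m'_3$. I do not expect any conceptual obstacle beyond this bookkeeping.
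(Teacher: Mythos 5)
Your proposal is correct and is essentially the paper's own argument: the paper proves this theorem simply by citing Proposition \ref{lbsa}, Theorem \ref{fat} and Definition \ref{ctl}, and your homogeneous-triple bookkeeping on $A\oplus B$ is exactly the verification hidden behind that citation. (One notational caveat: the collapsed mixed brackets produce $y\dashv(\cdots)$ in the first slot of $m'_2$ and $m'_3$, so the $\perp$ printed there in Theorem \ref{fat} is evidently a misprint rather than an obstacle to your type-by-type matching.)
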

\begin{proof}
 It follows from Proposition \ref{lbsa}, Theorem \ref{fat} and Definition \ref{ctl}.
\end{proof}

\begin{proposition}
Let $\varphi: {A}\longrightarrow {B}$ be a crossed module  of triassociative algebras. Then  the maps :
\begin{enumerate}
\item [i)] $(\mu, id_{T{(B)}}): T{(A)}\rtimes T{(B)}\longrightarrow T{(B)}\rtimes T{(B)}$,
\item [ii)] $(id_{T{(B)}}, \varphi): T{(A)}\rtimes T{(A)}\longrightarrow T{(A)}\rtimes T{(B)}$,
\item [iii)] $ \varphi: T{(A)}\rtimes T{(B)}\longrightarrow T{(A)}\rtimes T{(B)}$ given by 
$\varphi(x, a)= (-x,\varphi(x) +a)$, 
\end{enumerate}
are morphisms of ternary Leibniz algebras.
\end{proposition}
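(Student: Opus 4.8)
The plan is to derive all three assertions at once by applying the construction $T$ of Lemma~\ref{att} to statements about \emph{triassociative} crossed modules already established, rather than computing directly inside the ternary Leibniz semidirect products.

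First I would record the elementary fact that $T$ acts on morphisms: if $f\colon A\to B$ is a morphism of triassociative algebras, then, since the ternary bracket $[x,y,z]=x\dashv(y\perp z-z\perp y)-(y\perp z-z\perp y)\vdash x$ of Lemma~\ref{att} is assembled only from $\dashv,\perp,\vdash$, the same linear map $f$ is a morphism $T(A)\to T(B)$ of ternary Leibniz algebras; in particular $\varphi\colon T(A)\to T(B)$ is such a morphism (as already used in Theorem~\ref{fat}).

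Now, because $\varphi\colon A\to B$ is a crossed module of triassociative algebras, the corresponding characterization established in Section~2 gives that
$$(\varphi,id_B)\colon A\rtimes B\to B\rtimes B,\qquad (id_A,\varphi)\colon A\rtimes A\to A\rtimes B,\qquad \big((x,a)\mapsto(-x,\varphi(x)+a)\big)\colon A\rtimes B\to A\rtimes B$$
are morphisms of triassociative algebras. Applying $T$ to each of them, and using the identity $T(A\rtimes B)=T(A)\rtimes T(B)$ from the Theorem just above (together with its instances for $B\rtimes B$ and $A\rtimes A$), we obtain morphisms of ternary Leibniz algebras $T(A)\rtimes T(B)\to T(B)\rtimes T(B)$, $T(A)\rtimes T(A)\to T(A)\rtimes T(B)$ and $T(A)\rtimes T(B)\to T(A)\rtimes T(B)$ whose underlying linear maps are exactly the three maps of the statement. (In particular ``$\mu$'' in (i) is $\varphi$, the identities being on $T(B)$ in (i) and on $T(A)$ in (ii).) This proves the proposition.

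For completeness, the self-contained alternative is to expand $[(l_1,p_1),(l_2,p_2),(l_3,p_3)]$ via the semidirect bracket of Definition~\ref{ctl}, apply the relevant map, and match it against the bracket of the images: for (i) and (ii) the comparison is termwise and reduces precisely to the crossed-module relations of Definition~\ref{cm} (valid for $(T(A),T(B),\varphi)$ by Theorem~\ref{fat}), while for (iii) one expands by trilinearity, notes that an $m$- or $m'$-term acquires the sign $(-1)^{k}$ under $x_i\mapsto -x_i$ with $k$ its number of $T(A)$-entries, and collects terms using Definition~\ref{cm} and the ternary Leibniz identity in $T(B)$. The only real difficulty is bookkeeping --- the eightfold trilinear expansion in each of the six action slots occurring in (iii) --- which the functorial argument above entirely avoids.
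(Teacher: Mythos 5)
Your argument is correct, but it takes a different route from the paper. The paper's proof is a direct one at the ternary level: it invokes Theorem \ref{fat} (so that $(T(A),T(B),\varphi)$ is a ternary Leibniz crossed module) together with the semidirect bracket of Definition \ref{ctl}, and checks the three maps componentwise against the crossed-module relations of Definition \ref{cm} --- i.e.\ the ternary analogue of the triassociative statement proved earlier from Definition \ref{crm}; this is exactly what you describe as your ``self-contained alternative.'' Your primary argument instead stays at the triassociative level: you use the Section~2 proposition that a triassociative crossed module $\varphi\colon A\to B$ makes $(\varphi,id_B)$, $(id_A,\varphi)$ and $(x,a)\mapsto(-x,\varphi(x)+a)$ morphisms of triassociative semidirect products, then transport these along $T$ using functoriality of $T$ on morphisms (immediate from Lemma \ref{att}, since the bracket is built from $\dashv,\perp,\vdash$) and the compatibility $T(A\rtimes B)=T(A)\rtimes T(B)$ stated just before the proposition (including its instances $T(B\rtimes B)=T(B)\rtimes T(B)$ and $T(A\rtimes A)=T(A)\rtimes T(A)$, which hold since that theorem only needs an action). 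This buys brevity and avoids the eightfold trilinear bookkeeping, at the cost of leaning on the compatibility theorem (itself proved from Proposition \ref{lbsa}, Theorem \ref{fat} and Definition \ref{ctl}), whereas the paper's direct route needs only Theorem \ref{fat} and Definition \ref{ctl} and exhibits the crossed-module identities explicitly. You are also right to read the ``$\mu$'' in (i) as $\varphi$ and the identity in (ii) as $id_{T(A)}$; those are typos in the statement, and your proof addresses the intended maps.
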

\begin{proof}
It follows from the Theorem \ref{fat} and  Definition \ref{ctl}.
\end{proof}

\begin{theorem}
Let $ \mathcal{L}$ and $\mathcal{P}$ be two Leibniz algebras together with an action of $\mathcal{P}$ on $\mathcal{L}$ 
defined by the maps $\mu_1 :\mathcal{L}\otimes \mathcal{P}\longrightarrow \mathcal{L}$ and
 $\mu_2 : \mathcal{P}\otimes \mathcal{L}\longrightarrow \mathcal{L}$. 
 Let us define the six linear maps : 
\begin{eqnarray}
m_1: L\times P \times P &\longrightarrow& L, \quad (l, p_1,p_2)\longmapsto  \mu_1(l, [p_1,p_2])\nonumber\\
m_2: P\times L \times P &\longrightarrow& L, \quad ( p_1,l,p_2)\longmapsto  \mu_2(p_2, m_2(l,p_2))\nonumber\\
m_3: P\times P \times L &\longrightarrow& L, \quad (p_1,p_2,l)\longmapsto  \mu_2(p_1, m_2(p_2,l))\nonumber\\
m'_1: P\times L \times L &\longrightarrow& L, \quad ( p,l_1,l_2)\longmapsto  \mu_2(p, [l_1,l_2])\nonumber\\
m'_2: L\times P \times L &\longrightarrow& L,\quad (l_1, p,l_2)\longmapsto \mu_1(l_1,m_2(p,l_2))\nonumber\\
m'_3: L\times L \times P &\longrightarrow& L, \quad ( l_1,l_2,p)\longmapsto  \mu_1(l_1,m_1(l_2, p))\nonumber
\end{eqnarray}
for any $l, l_1, l_2\in\mathcal{L},  p, p_1, p_2\in\mathcal{P}$.
\begin{enumerate}
 \item [1)] Then,
 we have an induced action of $T(\mathcal{P})$ on $T(\mathcal{L})$.
\item [2)] If $(\mathcal{L}, \mathcal{P}, \varphi)$ is a crossed module of Leibniz algebras, then $(T(\mathcal{L}), T(\mathcal{P}), \varphi)$
is a crossed module of ternary Leibniz algebras.
\item [3)] Moreover,  if $(\mathcal{L}', \mathcal{P}', \varphi')$ is another crossed module of Leibniz algebras  and $(\alpha, \beta)$
is a morphism of crossed modules from $(\mathcal{L}, \mathcal{P}, \varphi)$ to $(\mathcal{L}', \mathcal{P}', \varphi')$, 
then $(\alpha, \beta)$ is a morphism of crossed modules from $(T(\mathcal{L}), T(\mathcal{P}), \varphi)$ to 
$(T(\mathcal{L}'), T(\mathcal{P}'), \varphi')$.
\end{enumerate}
\end{theorem}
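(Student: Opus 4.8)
The plan is to reduce all three parts to one structural identity and then let Lemma~\ref{llt} do the rest. Write $\mathcal{L}\rtimes\mathcal{P}$ for the usual Leibniz semidirect product attached to the given action, with bracket $[(l_1,p_1),(l_2,p_2)]=([l_1,l_2]+\mu_1(l_1,p_2)+\mu_2(p_1,l_2),[p_1,p_2])$ on $\mathcal{L}\oplus\mathcal{P}$; this is a Leibniz algebra precisely because the six axioms of Definition~\ref{le} hold. First I would expand the ternary bracket $\{x,y,z\}=[x,[y,z]]$ of $T(\mathcal{L}\rtimes\mathcal{P})$ (Lemma~\ref{llt}) at $x=(l_1,p_1)$, $y=(l_2,p_2)$, $z=(l_3,p_3)$, and sort the resulting $\mathcal{L}$-component according to how the three arguments are distributed between $\mathcal{L}$ and $\mathcal{P}$: the piece with three $\mathcal{L}$-arguments is $\{l_1,l_2,l_3\}_{T(\mathcal{L})}$, the one with three $\mathcal{P}$-arguments lands in $[p_1,p_2,p_3]_{T(\mathcal{P})}$, and each remaining homogeneous piece is exactly one of the iterated composites of $\mu_1,\mu_2$ appearing in the statement, i.e. one of $m_1,m_2,m_3,m'_1,m'_2,m'_3$ (matched to its slot up to the obvious relabelling of indices). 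This both identifies the six maps and proves
$$T(\mathcal{L}\rtimes\mathcal{P}) \;=\; T(\mathcal{L})\rtimes T(\mathcal{P}),$$
the right-hand side being the ternary Leibniz semidirect product of Definition~\ref{ctl}.

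Part~1 is then essentially free. Since $\mathcal{L}\rtimes\mathcal{P}$ is Leibniz, $T(\mathcal{L}\rtimes\mathcal{P})$ is a ternary Leibniz algebra by Lemma~\ref{llt}, hence so is $T(\mathcal{L})\rtimes T(\mathcal{P})$. But the ternary Leibniz identity for a semidirect product of the shape in Definition~\ref{ctl}, once expanded by multilinearity and separated according to the distribution of its five arguments between $\mathcal{L}$ and $\mathcal{P}$, is term for term the system of equalities in Definition~\ref{cd} (the pure-$\mathcal{L}$ and pure-$\mathcal{P}$ parts being the ternary Leibniz identities already known for $T(\mathcal{L})$ and $T(\mathcal{P})$). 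Hence all those equalities hold, i.e. the six maps constitute an action of $T(\mathcal{P})$ on $T(\mathcal{L})$. One could instead verify Definition~\ref{cd} head-on, substituting the composite formulas for the $m_i,m'_i$ and invoking the axioms of Definition~\ref{le} one family at a time; this is far longer and uses nothing new.

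For Part~2, suppose $\varphi\colon\mathcal{L}\to\mathcal{P}$ is a crossed module of Leibniz algebras. The second assertion of Lemma~\ref{llt} already makes $\varphi\colon T(\mathcal{L})\to T(\mathcal{P})$ a morphism of ternary Leibniz algebras, so only the conditions listed in Definition~\ref{cm} remain. Substituting the composite expression for each $m_i$ (and the matching iterated bracket in $T(\mathcal{P})$, e.g. $[\varphi(l),p_1,p_2]_{T(\mathcal{P})}=[\varphi(l),[p_1,p_2]]$), every one of those conditions collapses to one or two uses of the Leibniz crossed module identities $\varphi\mu_1(l,p)=[\varphi(l),p]$, $\varphi\mu_2(p,l)=[p,\varphi(l)]$, $\mu_2(\varphi(l_1),l_2)=[l_1,l_2]=\mu_1(l_1,\varphi(l_2))$. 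For instance $\varphi(m_1(l,p_1,p_2))=\varphi(\mu_1(l,[p_1,p_2]))=[\varphi(l),[p_1,p_2]]$, and $m'_1(\varphi(l_1),l_2,l_3)=\mu_2(\varphi(l_1),[l_2,l_3])=[l_1,[l_2,l_3]]=\{l_1,l_2,l_3\}_{T(\mathcal{L})}$; the remaining cases are entirely analogous.

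Part~3 goes the same way. If $(\alpha,\beta)$ is a morphism of Leibniz crossed modules, then $\alpha$ and $\beta$ are morphisms of ternary Leibniz algebras by Lemma~\ref{llt}, and $\varphi'\alpha=\beta\varphi$ is inherited verbatim; for each index the equivariance relation $\alpha m_i(\cdots)=n_i(\cdots)$ (resp.\ $\alpha m'_i(\cdots)=n'_i(\cdots)$) follows by inserting the composite formulas and applying $\alpha\mu_1(l,p)=\mu'_1(\alpha(l),\beta(p))$, $\alpha\mu_2(p,l)=\mu'_2(\beta(p),\alpha(l))$ and $\beta([p,q])=[\beta(p),\beta(q)]$ once or twice. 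The only real difficulty anywhere in the argument is bookkeeping — matching each iterated composite of $\mu_1,\mu_2$ to the correct one of the six slots and keeping the many equalities of Definition~\ref{cd} straight; the identity $T(\mathcal{L}\rtimes\mathcal{P})=T(\mathcal{L})\rtimes T(\mathcal{P})$ established in the first step is what converts every verification into a consequence of Lemma~\ref{llt} rather than a fresh computation, exactly as in the triassociative case treated earlier in this section.
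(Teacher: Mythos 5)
Your argument is correct in substance, but it takes a genuinely different route from the paper's: the paper proves all three parts by brute-force verification of the identities in Definitions \ref{cd} and \ref{cm} (its proof is just the remark that this computation is ``long but straightforward''), whereas you reduce everything to the single structural identity $T(\mathcal{L}\rtimes\mathcal{P})=T(\mathcal{L})\rtimes T(\mathcal{P})$ and then let Lemma \ref{llt} carry the load: since $\mathcal{L}\rtimes\mathcal{P}$ is a Leibniz algebra, $T(\mathcal{L}\rtimes\mathcal{P})$ is ternary Leibniz, and sorting its bracket by the distribution of arguments between $\mathcal{L}$ and $\mathcal{P}$ simultaneously produces the six structure maps and the action axioms. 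What your approach buys is brevity and the semidirect-product compatibility as a by-product (mirroring the triassociative analogue proved later in this section); what the paper's direct computation buys is independence from one point you assert rather than prove, namely that the ternary Leibniz identity of a bracket of the shape in Definition \ref{ctl}, specialized to pure elements and projected to $\mathcal{L}$, gives \emph{exactly} the list of Definition \ref{cd} (the paper only records the other direction, action $\Rightarrow$ semidirect product, quoting \cite{JE}); this converse is easy --- evaluate on arguments each lying in $\mathcal{L}\oplus 0$ or $0\oplus\mathcal{P}$ and note that the $2^{5}-2=30$ mixed distributions match the $15+15$ equalities of Definition \ref{cd} --- but you should say so explicitly, since it is the hinge of Part 1. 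A second, minor point: your expansion in fact corrects the statement's formulas, giving $m_2(p_1,l,p_2)=\mu_2(p_1,\mu_1(l,p_2))$, $m_3(p_1,p_2,l)=\mu_2(p_1,\mu_2(p_2,l))$, $m'_2(l_1,p,l_2)=[l_1,\mu_2(p,l_2)]$ and $m'_3(l_1,l_2,p)=[l_1,\mu_1(l_2,p)]$; the last two are not composites of $\mu_1,\mu_2$ alone but involve the bracket of $\mathcal{L}$ in the outer slot, which is the only type-correct reading of the printed $\mu_1(l_1,m_2(p,l_2))$ and $\mu_1(l_1,m_1(l_2,p))$ --- worth stating, since your Part 2 computations (e.g.\ $m'_2(l_1,\varphi(l_2),l_3)=[l_1,\mu_2(\varphi(l_2),l_3)]=[l_1,[l_2,l_3]]$) use precisely these corrected formulas together with the crossed-module identities, exactly as the paper's direct verification would.
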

\begin{proof}
  The proof is long but straightforward.
\end{proof}

\begin{theorem}
 Let $(\mathcal{L}, \mathcal{P}, \varphi)$ be a crossed module of ternary Leibniz algebras with the action of $\mathcal{P}$ on $\mathcal{L}$ 
defined by the maps $\mu_1 :\mathcal{L}\otimes \mathcal{P}\longrightarrow \mathcal{L}$, 
 $\mu_2 : \mathcal{P}\otimes \mathcal{L}\longrightarrow \mathcal{L}$ and $R$ be a Rota-Baxter operator on both  $ \mathcal{L}$ and $\mathcal{P}$
such that for all $l\in \mathcal{L}, p\in \mathcal{P}$,
\begin{eqnarray}
 \mu_1(R(l), R(p))&=&R\Big(\mu_1(R(l), p)+\mu_1(l, R(p))+\lambda\mu_1(l, p)\Big),\nonumber\\
{\mu_2(R(p), R(l))}&=&R\Big(\mu_2(R(p), l)+\mu_2(p, R(l))+\lambda\mu_2(p, l)\Big).\nonumber
\end{eqnarray}
Let us define the bilinear maps $\mu_1^R :\mathcal{L}\otimes \mathcal{P}\longrightarrow \mathcal{L},
 \mu_2^R : \mathcal{P}\otimes \mathcal{L}\longrightarrow \mathcal{L}$
\begin{eqnarray}
 \mu_1^R(l, p)&=&\mu_1(R(l), p)+\mu_1(l, R(p))+\lambda\mu_1(l, p),\nonumber\\
\mu_2^R(p, l)&=&\mu_2(R(p), l)+\mu_2(p, R(l))+\lambda\mu_2(p, l).\nonumber
\end{eqnarray}
Then the triple
\begin{enumerate}
 \item [i)] $(\mathcal{L}_R=(L, [-, -]_R), \mathcal{P}_R=(\mathcal{P}, [-, -]_R),  \varphi)$ is a crossed module of Leibniz algebras with
 action of $\mathcal{P}_R$ on $\mathcal{L}_R$ defined by $\mu_1^R, \mu_2^R$.
\item [ii)] $\Big(T(\mathcal{P}_R)=(T(\mathcal{P}), [-, [-, -]_R]_R), T(\mathcal{L}_R)=(L, [-, [-, -]_R]_R), \varphi\Big)$ is a crossed module
 of ternary Leibniz algebras with action of $T(\mathcal{P}_R)$ on $T(\mathcal{L}_R)$ defined by
\begin{eqnarray}
 m_1(l, p_1,p_2)&=&\mu_1^R(l, [p_1,p_2]_R), \quad m_1'( p,l_1,l_2)=\mu_2^R(p, [l_1,l_2]_R),\nonumber\\
m_2( p_1,l,p_2)&=&\mu_2^R(p_2, m_2^R(l,p_2)),\quad m_2'(l_1, p,l_2)=\mu_1^R(l_1,m_2^R(p,l_2)), \nonumber\\
m_3(p_1,p_2,l)&=&\mu_2^R(p_1, m_2^R(p_2,l)), \quad m_3'(l_1,l_2, p)=\mu_1^R(l_1,m_1^R(l_2, p)), \nonumber
\end{eqnarray}
for any $l, l_1, l_2\in\mathcal{L},  p, p_1, p_2\in\mathcal{P}$.
\end{enumerate}
\end{theorem}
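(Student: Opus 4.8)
The plan is to assemble the statement from results already proved in the excerpt, rather than to verify crossed-module axioms from scratch. For part (i), I would first invoke Lemma \ref{lrb} applied separately to $\mathcal{L}$ and to $\mathcal{P}$: since $R$ is a Rota-Baxter operator of weight $\lambda$ on each, the brackets $[-,-]_R$ make $\mathcal{L}_R$ and $\mathcal{P}_R$ into Leibniz algebras, and $R$ is a morphism $\mathcal{L}_R\to\mathcal{L}$ and $\mathcal{P}_R\to\mathcal{P}$. Then I would check that $\mu_1^R,\mu_2^R$ define an action of $\mathcal{P}_R$ on $\mathcal{L}_R$: this is the Leibniz-action analogue of Lemma \ref{lrb}, and the two compatibility hypotheses relating $R$ with $\mu_1,\mu_2$ are exactly what is needed to run the same computation that proves $\mathcal{L}_R$ is Leibniz, now with the mixed $\mathcal{L}$-$\mathcal{P}$ terms. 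The crossed-module relations of Definition for Leibniz crossed modules (items 1–3) for $\varphi:\mathcal{L}_R\to\mathcal{P}_R$ follow from the corresponding relations for $\varphi:\mathcal{L}\to\mathcal{P}$ by applying $\varphi$, using that $\varphi$ is a morphism of the original structures and that $\varphi R=R\varphi$ (which is implicit since $R$ acts on both and must be compatible with $\varphi$ — I would state this as a standing assumption, or note it follows from $\varphi$ being a crossed-module morphism together with $R$ being defined "on both"); each of the three defining identities is linear in the bracket/action, so it transports termwise through the weight-$\lambda$ deformation.

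For part (ii), the strategy is to reduce to an already-proven theorem. By part (i) we now have a genuine crossed module of Leibniz algebras $(\mathcal{L}_R,\mathcal{P}_R,\varphi)$ with action $\mu_1^R,\mu_2^R$. The preceding theorem in the excerpt (the one constructing $(T(\mathcal{L}),T(\mathcal{P}),\varphi)$ from a Leibniz crossed module via the maps $m_i,m_i'$) applies verbatim to this crossed module. Feeding $(\mathcal{L}_R,\mathcal{P}_R,\varphi)$ with action $\mu_1^R,\mu_2^R$ into that theorem yields a crossed module of ternary Leibniz algebras $(T(\mathcal{L}_R),T(\mathcal{P}_R),\varphi)$, where $T(\mathcal{L}_R)$ carries the bracket $\{x,y,z\}=[x,[y,z]_R]_R$ by definition of $T$ applied to the Leibniz algebra $\mathcal{L}_R$, and the six action maps are precisely $m_1(l,p_1,p_2)=\mu_1^R(l,[p_1,p_2]_R)$, $m_1'(p,l_1,l_2)=\mu_2^R(p,[l_1,l_2]_R)$, and so on — which is exactly the list displayed in the statement. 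So part (ii) is simply the composite "apply part (i), then apply the earlier Leibniz-to-ternary crossed module theorem."

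Concretely I would organize the write-up as: (1) cite Lemma \ref{lrb} twice to get $\mathcal{L}_R$, $\mathcal{P}_R$; (2) verify the action axioms for $\mu_1^R,\mu_2^R$ — this is the one genuinely computational step, running through the six identities of Definition \ref{le} and using the two hypothesized Rota-Baxter compatibilities for $\mu_1,\mu_2$ to match the deformed terms, entirely parallel to the proof of Lemma \ref{lrb}; (3) check relations 1–3 of the Leibniz crossed-module definition for $\varphi$ by applying $\varphi$, using $\varphi$-equivariance of $R$ and the original crossed-module relations; (4) for (ii), invoke the earlier theorem on $(\mathcal{L}_R,\mathcal{P}_R,\varphi)$ and observe that the induced data on $T(\mathcal{L}_R),T(\mathcal{P}_R)$ is literally what the statement asserts.

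The main obstacle is step (2): checking that $\mu_1^R,\mu_2^R$ genuinely form an action of $\mathcal{P}_R$ on $\mathcal{L}_R$ requires expanding each of the six action identities after deformation and cancelling the extra terms using the Rota-Baxter relations — the same kind of bookkeeping that underlies Lemma \ref{lrb}, but with more maps in play. A secondary subtlety is the hypothesis that $R$ commutes with $\varphi$: this is needed for step (3) but is not explicitly listed, so I would either add it to the hypotheses or remark that "$R$ a Rota-Baxter operator on both $\mathcal{L}$ and $\mathcal{P}$" is to be understood as including $\varphi R=R\varphi$, which is the natural compatibility making $R$ an operator on the crossed module. Everything else is formal: once (i) is in hand, (ii) is a direct appeal to the previously established Leibniz-crossed-module-to-ternary-crossed-module construction.
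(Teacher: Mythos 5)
The paper states this theorem without any proof, so there is no authorial argument to diverge from; your decomposition --- Lemma \ref{lrb} applied to $\mathcal{L}$ and $\mathcal{P}$, a direct verification that $\mu_1^R,\mu_2^R$ satisfy the six identities of Definition \ref{le}, the three crossed-module identities for $\varphi$, and then part (ii) obtained by feeding $(\mathcal{L}_R,\mathcal{P}_R,\varphi)$ into the preceding Leibniz-to-ternary crossed-module theorem --- is exactly the argument the surrounding results are set up to supply, and it is correct, including your reading of the hypothesis as a crossed module of (binary) Leibniz algebras, without which the data $\mu_1,\mu_2$ and the conclusion of (i) would not make sense. Your observation that the crossed-module identities for $\varphi:\mathcal{L}_R\to\mathcal{P}_R$ force $\varphi R=R\varphi$ is a genuine catch: the statement omits this hypothesis, but the paper's analogous proposition on averaging operators and triassociative crossed modules does assume commutation with $\varphi$, so adding it explicitly is the right repair rather than an optional remark. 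For the one computational step you defer, note that the two compatibility conditions say precisely that $R\oplus R$ is a Rota--Baxter operator of weight $\lambda$ on the semidirect product $\mathcal{L}\rtimes\mathcal{P}$, so Lemma \ref{lrb} applied there produces the Leibniz algebra $\mathcal{L}_R\rtimes\mathcal{P}_R$ and yields the six action identities for $\mu_1^R,\mu_2^R$ in one stroke.
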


Following Rota-Baxter module \cite{BB1} and Nijenhuis Leibniz representation \cite{BMR}, we have :
 \begin{definition}
 Let $(\mathcal{L}, [-, -, -], R)$  and $(\mathcal{P}, [-, -, -], R)$ be a two ternary Leibniz algebra (i.e. ternary Leibniz algebras equipped
with a Rota-Baxter operator),   $m_i, i=1,2,3,4,5,6$ be an action of $\mathcal{P}$ on $\mathcal{L}$. 
 We say that $R$ is a Rota-Baxter action of ternary Leibniz algebras if, for all $l, l_1, l_2\in \mathcal{L}$,
$p, p_1, p_2\in \mathcal{P}$, we have 
\footnotesize
\begin{eqnarray}
 m_1(R(p), R(l_1), R(l_2))&=&R\Big(m_1(R(p), R(l_1), l_2)+m_1(p, R(l_1), R(l_2))+m_1(R(p), l_1, R(l_2))\Big),\\
m_2(R(l_1), R(p), R(l_2))&=&R\Big(m_2(R(l_1),R(p), l_2)+m_2(l_1, R(p), R(l_2))+m_2(R(l_1), p, R(l_2))\Big),\\
m_3(R(l_1), R(l_2), R(p))&=&R\Big(m_3(R(l_1), l_2, R(p))+m_3(l_1, R(l_2), R(p))+m_3(R(l_1), R(l_2), p)\Big),\\
 m_4(R(l), R(p_1), R(p_2))&=&R\Big(m_4(R(l), R(p_1), p_2)+m_4(l, R(p_1), R(p_2))+m_4(R(l), p_1, R(p_2))\Big),\\
m_5(R(p_1), R(l), R(p_2))&=&R\Big(m_5(R(p_1), R(l), p_2)+m_5(p_1, R(l), R(p_2))+m_5(R(p_1), l, R(p_2))\Big),\\
m_6(R(p_1), R(p_2), R(l))&=&R\Big(m_6(R(p_1), R(p_2), l)+m_6(p_1, R(p_2), R(l))+m_6(R(p_1), p_2, R(l))\Big).
\end{eqnarray}
\end{definition}

\begin{theorem}
 Let $(\mathcal{L}, [-, -, -], R)$  and $(\mathcal{P}, [-, -, -], R)$ be a two ternary Leibniz Rota-Baxter algebras,
$m_i, i=1, 2, 3, 4, 5, 6$ be an action of $\mathcal{P}$ on $\mathcal{L}$ and $R$ a Rota-Baxter action of ternary Leibniz algebras.
Let us define the maps :
\begin{eqnarray}
 m_1'(p, l_1, l_2)&=&m_1(R(p), R(l_1), l_2)+m_1(p, R(l_1), R(l_2))+m_1(R(p), l_1, R(l_2)),\\
m_2'(l_1, p, l_2)&=&m_2(R(l_1),R(p), l_2)+m_2(l_1, R(p), R(l_2))+m_2(R(l_1), p, R(l_2)),\\
m_3'(l_1, l_2, p)&=&m_3(R(l_1), l_2, R(p))+m_3(l_1, R(l_2), R(p))+m_3(R(l_1), R(l_2), p),\\
 m_4'(l, p_1, p_2)&=&m_4(R(l), R(p_1), p_2)+m_4(l, R(p_1), R(p_2))+m_4(R(l), p_1, R(p_2)),\\
m_5'(p_1, l, p_2)&=&m_5(R(p_1), R(l), p_2)+m_5(p_1, R(l), R(p_2))+m_5(R(p_1), l, R(p_2)),\\
m_6'(p_1, p_2, l)&=&m_6(R(p_1), R(p_2), l)+m_6(p_1, R(p_2), R(l))+m_6(R(p_1), p_2, R(l)),
\end{eqnarray}
for all $l, l_1, l_2\in \mathcal{L}$, $p, p_1, p_2\in \mathcal{P}$.
Then $(m'_1, m'_2, m_3', m_4', m_5', m_6')$ is an action of $(\mathcal{P}, [-, -, -]_R)$  on $(\mathcal{L}, [-, -, -]_R)$.
\end{theorem}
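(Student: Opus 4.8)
The plan is to reduce the statement to Lemma~\ref{trb} by working inside a semidirect product. Form $E:=\mathcal{L}\rtimes\mathcal{P}$, the ternary Leibniz algebra on $\mathcal{L}\oplus\mathcal{P}$ attached by Definition~\ref{ctl} to the given action $(m_1,\dots,m_6)$, and set $\widetilde{R}:E\to E$, $\widetilde{R}(l,p):=(R(l),R(p))$. First I would check that $\widetilde{R}$ is a Rota-Baxter operator on $E$: writing the defining identity of $\widetilde{R}$ on three elements $(l_i,p_i)$ and splitting it into its $\mathcal{P}$- and $\mathcal{L}$-components, the $\mathcal{P}$-component is exactly the Rota-Baxter identity for $R$ on $\mathcal{P}$, while the $\mathcal{L}$-component breaks up, structure map by structure map, into the Rota-Baxter identity for $R$ on $\mathcal{L}$ together with the six identities that define a Rota-Baxter action. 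Hence the hypotheses of the theorem are precisely what is needed for $\widetilde{R}$ to be Rota-Baxter on $E$, and Lemma~\ref{trb} then yields that $E_{\widetilde{R}}=(E,[-,-,-]_{\widetilde{R}})$ is again a ternary Leibniz algebra.

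Next I would compute $[-,-,-]_{\widetilde{R}}$ in components. Because $\widetilde{R}$ is diagonal, its $\mathcal{P}$-component is the bracket $[-,-,-]_R$ of Lemma~\ref{trb}, i.e. the $\mathcal{P}_R$-bracket, and its $\mathcal{L}$-component is, by inspection of the six summands of the bracket in Definition~\ref{ctl}, exactly the sum of the six maps $m'_1,\dots,m'_6$ of the theorem evaluated on the appropriate arguments: each summand of the Definition~\ref{ctl} bracket, after replacing every occurrence of $[-,-,-]$ by $[-,-,-]_R$ and every occurrence of a structure map by its deformed version, becomes the corresponding $m'_i$. Consequently $E_{\widetilde{R}}$ is, as a vector space with ternary bracket, nothing but $\mathcal{L}_R\oplus\mathcal{P}_R$ equipped with the Definition~\ref{ctl} bracket built from $(m'_1,\dots,m'_6)$. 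It then remains to invoke the converse of Definition~\ref{ctl}: the ternary Leibniz identity on $\mathcal{L}_R\oplus\mathcal{P}_R$ decomposes, according to how many of its five entries lie in $\mathcal{L}_R$, into the ternary Leibniz identity of $\mathcal{P}_R$, that of $\mathcal{L}_R$, and the identities of Definition~\ref{cd}; since $E_{\widetilde{R}}$ is ternary Leibniz and $\mathcal{L}_R,\mathcal{P}_R$ are ternary Leibniz algebras by Lemma~\ref{trb}, the family $(m'_1,\dots,m'_6)$ must satisfy all identities of Definition~\ref{cd}, i.e. it is an action of $(\mathcal{P},[-,-,-]_R)$ on $(\mathcal{L},[-,-,-]_R)$.

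The main obstacle is the bookkeeping in the first step: one must verify that the $\mathcal{L}$-component of the Rota-Baxter identity for $\widetilde{R}$ disentangles cleanly, with the six $\mathcal{L}$-valued terms matched one-to-one to the six Rota-Baxter-action identities and no cross terms surviving, and that the weight contributions agree on both sides (this is automatic provided the Rota-Baxter-action identities are used in the same weight as $[-,-,-]_R$; with the weight-zero convention of the definition above one simply sets $\lambda=0$ throughout, and uses the weight-zero bracket of Lemma~\ref{trb} accordingly). A self-contained alternative, avoiding the semidirect product altogether, is to substitute the explicit formulas for the $m'_i$ and for $[-,-,-]_R$ directly into each identity of Definition~\ref{cd} and expand, repeatedly collapsing terms via the Rota-Baxter relations for $R$ on $\mathcal{L}$, on $\mathcal{P}$, and on the action; this is correct but appreciably longer and more error-prone, which is why I would favour the semidirect-product route.
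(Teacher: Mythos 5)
Your proposal is correct, but it takes a genuinely different route from the paper, whose proof is the direct one: expand the maps $m_i'$ and the brackets $[-,-,-]_R$ in each of the identities of Definition \ref{cd} and collapse terms using the Rota--Baxter relations on $\mathcal{L}$, on $\mathcal{P}$, and on the action (your ``self-contained alternative''). What you do instead is lift $R$ diagonally to $\widetilde{R}(l,p)=(R(l),R(p))$ on the semidirect product $E=\mathcal{L}\rtimes\mathcal{P}$ of Definition \ref{ctl}, check that the componentwise splitting of the Rota--Baxter identity for $\widetilde{R}$ is exactly the Rota--Baxter identity on $\mathcal{P}$, the one on $\mathcal{L}$, and the six Rota--Baxter-action identities (no cross terms survive, since $\widetilde{R}$ preserves the two summands and each summand of the Definition \ref{ctl} bracket is determined by the distribution of its arguments), invoke Lemma \ref{trb} to conclude that $E_{\widetilde{R}}$ is ternary Leibniz, identify its bracket with the Definition \ref{ctl} bracket built from $[-,-,-]_R$, $[-,-,-]_R$ and the $m_i'$, and finally read off the axioms of Definition \ref{cd} by evaluating the Leibniz identity of $E_{\widetilde{R}}$ on pure elements $(l,0)$ and $(0,p)$. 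This buys a conceptual reduction of thirty verifications to a single Rota--Baxter check plus Lemma \ref{trb}, and it explains where the formulas for the $m_i'$ come from; the paper's direct computation, by contrast, needs no auxiliary structure and no unstated facts. Two points you flag should indeed be made explicit for the argument to be complete: first, the ``converse of Definition \ref{ctl}'' is not stated in the paper, so you must supply the short multilinearity argument (every instance of the ternary Leibniz identity on $\mathcal{L}\oplus\mathcal{P}$ is a sum of instances on pure elements, and each mixed pure instance is precisely one identity of Definition \ref{cd}); note that this requires the first component of the Definition \ref{ctl} bracket to contain the term $[l_1,l_2,l_3]_{\mathcal{L}}$, which the printed formula omits (evidently a typo, cf. \cite{JE}), as the axioms of Definition \ref{cd} involve the bracket of $\mathcal{L}$. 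Second, since the paper's notion of Rota--Baxter action carries no weight terms, Lemma \ref{trb} must be applied with $\lambda=0$ throughout, as you say; for $\lambda\neq 0$ the stated $m_i'$ would require additional $\lambda$-terms and the theorem as written would not follow.
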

\begin{proof}
 It comes from direct computation.
\end{proof}


%
%
\end{document}